\newtheorem{theorem}{Theorem}[section]
\newtheorem{lemma}[theorem]{Lemma}
\newtheorem{proposition}[theorem]{Proposition}
\newtheorem{corollary}[theorem]{Corollary}
\theoremstyle{definition}
\newtheorem{definition}[theorem]{Definition}
\newtheorem{example}[theorem]{Example}
\theoremstyle{remark}
\newtheorem{remark}[theorem]{Remark}
\numberwithin{equation}{section}
\begin{document}

\setcounter{page}{1}

\title[On biamenability of Banach algebras]{On biamenability of Banach algebras}
\author[S. Barootkoob]{S. Barootkoob}
\address{Faculty of Basic Sciences, University of Bojnord, P.O. Box 1339, Bojnord, Iran}
\email{s.barutkub@ub.ac.ir}

\subjclass[2010]{Primary 46H20; Secondary 46H25.}

\keywords{{biderivation}, {inner biderivation}, {biamenability}.}


\begin{abstract}
In this paper, we introduce the concept of biamenability of Banach algebras and we show that despite the apparent similarities between   amenability and biamenability of Banach algebras, they lead to very different, and
somewhat opposed, theories. In this regard,  we show that commutative Banach algebras such as $\mathbb{R}$ and $  \mathbb{C}$   tend
to lack biamenability, while they may be amenable and highly noncommutative Banach algebras such as $B(H)$ for an infinite dimensional Hilbert space $H$ tend
to be biamenable, while they are not amenable. Also, we show that although the unconditional unitization of an amenable  Banach algebra is amenable but in general unconditional unitization of a Banach algebra is not biamenable.

This concept is used for finding the character space of some Banach algebras. For example we will show that for each infinite dimensional Hilbert space $H$, and each integer $n\geq 0$,  $B(H)^{(2n)}$  and the module extension Banach algebra $B(H)\oplus B(H)^{(2n)}$ have empty spectrum.

Finally, we introduce the concept of biamenability for a pair of Banach algebras and we study the relation between biamenability of a pair of Banach algebras and amenability of them.
\end{abstract}
 \maketitle

\section{Introduction}
A derivation from a Banach algebra $A$ to a
  Banach $A$-bimodule $X$ with the continuous module operations is a bounded linear mapping $d:A\rightarrow X$
  such that  $$d(ab)=d(a)b+ad(b)\ \ \ (a,b\in A).$$ For each $x\in
  X$ the  mapping $\delta_x:a\rightarrow ax-xa$,  $(a \in A)$  is a
  bounded derivation, called an inner derivation.\\
Let $X$ be a Banach $A$-bimodule. Then $X^*$ is a dual   Banach $A$-bimodule, by defining $a.f$ and $f.a$, for each $a\in A$ and $f\in X^*$ by$$a.f(x)=f(xa) \ \ \  ,\ \ \ \ f.a(x)=f(ax)\hspace{1cm}(x\in X).$$
Similarly, the higher duals $X^{(n)}$ can be made into Banach $A$-bimodules in a natural fashion.   

  A Banach algebra $A$ is called amenable if for each Banach $A$-bimodule $X$, the only derivations from $A$ to $X^*$ are inner derivations. For more details about this notion see \cite{Ru}. 
  
Let $A$ be a Banach algebra and $X$ be a Banach $A$-bimodule, a bounded bilinear mapping $D:A\times A \rightarrow X$ is called a biderivation if $D$ is a derivation  with respect to both arguments. That is the mappings $_aD:A\rightarrow X$ and $D_b:A\rightarrow X$  are derivations. Where $$_aD(b)=D(a,b)=D_b(a) \hspace{2cm}(a, b\in A).$$ We denote the space of such biderivations by $BZ^1(A, X)$.

Consider the subspace  $Z(A,X) =\{x\in X: ax=xa \ \  \forall a\in A\}$ of $X$. Then for each $x\in Z(A,X)$, 
the mapping $\Delta_x:A \times A\rightarrow X$ defined by
  $$\Delta_x(a,b)=x[a,b]=x(ab-ba) \ \ \ \ \ \ (a, b\in A)$$
 is a basic example of a biderivation and called an inner biderivation. We denote the space of such inner biderivations by $BN^1(A,X)$. 
 For more applications of biderivations,  see the survey article {\cite[Section 3]{Bre}}. Some algebraic aspects of biderivations on certain algebras were investigated by many authors; see for example \cite{Ben, DW}, where the structures of biderivations on triangular algebras and generalized matrix algebras are discussed, and particularly   the question of whether  biderivations on these algebras are inner, was considered. 

We define the first bicohomology group $BH^1(A,X)$ as follows, $$BH^1(A,X)=\frac{BZ^1(A,X)}{BN^1(A,X)}.$$
 Obviously $BH^1(A,X)=0$ if and only if every biderivation from $A\times A$ to $X$ is an inner biderivation.  Now we are motivated to define the concept of biamenability of Banach algebras as follows.\\
A Banach algebra $A$ is biamenable if for each Banach $A$-bimodule $X$ we have $BH^1(A, X^*) = 0.$

Although one might expect that biderivations and biamenability must run parallel to derivations and amenable Banach algebras what is true is that although there are some external similarities between  them but they lead to very different, and
somewhat opposed, theories. Indeed  we show that commutative Banach algebras tend
to lack biamenability, while highly noncommutative Banach algebras tend
to be biamenable. Thus for instance, the ground algebras $\mathbb{C}$ and $\mathbb{R}$ are not
biamenable (while they are trivially amenable) and $B(H)$, the algebra of all bounded operators
on an infinite dimensional Hilbert space $H$, is biamenable, but not amenable. Moreover, if $H$ is
finite dimensional, it turns out that $B(H)$ is amenable, but it fails to be biamenable.
\section{biamenable  Banach algebras}
 For an example of  a biamenable Banach algebra we commence with the next lemmas.
The following lemma  is similar to Corollary 2.4 of \cite{b}, where it is introduced for a biderivation $D:A\times A\rightarrow A$. 
\begin{lemma}\label{lem1}
For each Banach $A$-bimodule $X$ and each biderivation $D:A\times A\rightarrow X$, $$D(a,b)[c,d]=[a,b]D(c,d)\hspace{1cm}(a, b, c, d\in A).$$ 
\end{lemma}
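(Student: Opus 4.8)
The plan is to prove the identity $D(a,b)[c,d]=[a,b]D(c,d)$ by exploiting the fact that $D$ is a derivation in each variable separately, and computing a mixed expression two different ways. The key idea is to expand the product $D(ab,cd)$ — or rather to track how the bimodule multiplications interact with the Leibniz rule in each slot — so that the two expansions can be compared. Concretely, I would fix the identity we expect from single-variable derivations and then iterate it in the second (or first) argument.

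First I would record the one-variable consequence. Since $_aD$ is a derivation for each fixed $a$, and since an inner biderivation is built from the commutator, the natural starting point is the classical fact (the analogue of \cite[Corollary 2.4]{b} cited just above) that a single derivation $d$ satisfies $d(a)[b,c] = [a,b]d(c)$ type relations once one uses that $d$ maps into a commutative-like structure forced by the Leibniz rule. More carefully, I would begin from the derivation property in the first variable, $D(uv,w)=D(u,w)\,v+u\,D(v,w)$, and in the second variable, $D(u,vw)=D(u,v)\,w+v\,D(u,w)$, and compute $D([a,b]\cdot, \cdot)$ or evaluate $D$ on a commutator-type argument.

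The main step is to compute the quantity $D(ab,cd)$ using the two-variable Leibniz expansions in both orders. Expanding first in the left argument and then in the right (or symmetrically expanding $D(ab, cd)$ and $D(ba, dc)$) produces a collection of terms of the form $x\,D(y,z)$ and $D(y,z)\,x$. By subtracting the expansion obtained from the reversed products and cancelling the terms that are symmetric under the swaps $a\leftrightarrow b$ and $c\leftrightarrow d$, the surviving terms should rearrange into exactly $D(a,b)[c,d]-[a,b]D(c,d)$, forcing their difference to vanish. I expect the cleanest route is to apply the one-variable identity to $_aD$ and to $D_d$ separately and then combine, so that the commutators $[a,b]$ and $[c,d]$ emerge naturally from antisymmetrization.

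The hard part will be the bookkeeping: keeping track of on which side of $X$ each algebra element acts, since $X$ is only a bimodule and the left and right actions need not coincide. Unlike the case $X=A$ treated in \cite{b}, here I cannot use associativity of $X$ itself, so every cancellation must be justified purely through the bimodule axioms $(x\,u)\,v = x\,(uv)$, $u\,(v\,x)=(uv)\,x$, and $(u\,x)\,v = u\,(x\,v)$, together with the two Leibniz rules. I would organize the computation so that the antisymmetrization in each pair of variables is performed before combining the two slots, which should make the matching of terms transparent and confine the difficulty to careful term-matching rather than to any genuinely new idea.
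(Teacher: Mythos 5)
Your approach is correct and is essentially the argument behind the paper's own treatment of this lemma, which gives no proof at all but simply points to Corollary 2.4 of Bre\v{s}ar \cite{b}: expanding the single mixed term $D(ac,bd)$ (your $D(ab,cd)$ yields the same identity after relabeling $b\leftrightarrow c$) by the Leibniz rule first in one argument and then in the other, and equating the two expansions, the two terms $bD(a,d)c$ and $aD(c,b)d$ appear identically in both and cancel, after which the bimodule axioms $(xu)v=x(uv)$, $u(vx)=(uv)x$, $(ux)v=u(xv)$ turn the remainder into $D(a,b)[c,d]=[a,b]D(c,d)$ --- no antisymmetrization over $a\leftrightarrow b$, $c\leftrightarrow d$ is needed, since the cancellation is automatic. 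The one flaw is the motivational ``one-variable fact'' you invoke (a single derivation satisfying $d(a)[b,c]=[a,b]d(c)$-type relations), which is not a true statement, but your main step never actually uses it, so the plan goes through as written.
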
 
 In Proposition 2.1.3 of \cite{Ru}  it is shown that if $A$ has  a bounded approximate identity,
and one of module actions is trivial, then the only derivations from $A$ to $X^*$ are inner derivations. The following lemma introduces a condition   that not only implies the innerness of biderivations but it also forces them to be zero.
 \begin{lemma}\label{lem2}
If a Banach algebra $A$ has  a bounded left approximate identity  and $span\{ab-ba: a,b\in A\}$ is dense in $A$, then for every Banach $A$-bimodule $X$ such that $XA=0$ we have $BZ^1(A,X)=0$.  
 \end{lemma}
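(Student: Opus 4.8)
The plan is to show that the hypothesis $XA=0$ collapses all right module actions, so that the symmetric identity of Lemma \ref{lem1} degenerates into a one-sided annihilation forcing $A$ to left-annihilate the range of any biderivation; the bounded left approximate identity then finishes the job. Throughout let $D\in BZ^1(A,X)$ and let $\{e_\alpha\}$ denote the bounded left approximate identity, so that $e_\alpha a\to a$ for every $a\in A$.

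First I would feed $XA=0$ into Lemma \ref{lem1}, which supplies $D(a,b)[c,d]=[a,b]D(c,d)$ for all $a,b,c,d\in A$. Since $[c,d]\in A$ and $XA=\{0\}$, the left-hand side $D(a,b)[c,d]$ is a product of an element of $X$ with an element of $A$, hence equals $0$. Consequently $[a,b]D(c,d)=0$ for all $a,b,c,d\in A$; in other words every commutator $ab-ba$ left-annihilates the entire range of $D$.

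Next I would upgrade this to $A\cdot D(c,d)=0$. By bilinearity every element of $\mathrm{span}\{ab-ba:a,b\in A\}$ left-annihilates $D(c,d)$, and since this span is dense in $A$ and the map $u\mapsto u\,x$ is continuous for each fixed $x\in X$, it follows that $u\,D(c,d)=0$ for every $u\in A$. In particular $e_\alpha D(c,d)=0$ for every index $\alpha$. Finally I would bring the approximate identity in through the first variable: using that $D$ is a derivation in its first argument together with $XA=0$ gives $D(e_\alpha a,b)=D(e_\alpha,b)\,a+e_\alpha D(a,b)=e_\alpha D(a,b)$, because $D(e_\alpha,b)\,a\in XA=\{0\}$. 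Letting $\alpha\to\infty$ and invoking $e_\alpha a\to a$ and the continuity of $D$ yields $D(a,b)=\lim_\alpha e_\alpha D(a,b)$, and each term on the right vanishes by the previous step. Hence $D(a,b)=0$ for all $a,b\in A$, that is $D=0$, so $BZ^1(A,X)=0$.

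The step I expect to require the most care is the passage from commutators to all of $A$: one must combine the density of $\mathrm{span}\{ab-ba\}$ with the \emph{separate} continuity of the one-sided module action, and then feed the resulting annihilation back into the approximate-identity limit. The one genuinely clever observation is that $XA=0$ turns the apparently symmetric relation of Lemma \ref{lem1} into a pure left-annihilation statement; once that is noticed, the remaining computations are routine consequences of the derivation identities in each variable.
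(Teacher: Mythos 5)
Your proposal is correct and follows essentially the same route as the paper: apply Lemma \ref{lem1} together with $XA=0$ to obtain $[a,b]D(c,d)=0$, upgrade this via density of the span of commutators and continuity of the module action to $uD(c,d)=0$ for all $u\in A$, and then use the derivation identity in the first variable with the bounded left approximate identity to conclude $D(a,b)=\lim_\alpha\bigl(e_\alpha D(a,b)+D(e_\alpha,b)a\bigr)=0$. No gaps; this matches the paper's argument step for step.
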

 \begin{proof}
 Let $(e_\alpha)$ be a left approximate identity of $A$. Lemma \ref{lem1} says that for each $D\in BZ^1(A,X)$,$$[a,b]D(c,d)=0\hspace{1cm}(a, b, c, d\in A).$$ 
So by density we have $aD(b,c)=0$, for each $a, b, c\in A$.
 Now since $XA=0$ we have  $$\begin{array}{rcl}
 D(a,b)&=&\lim_\alpha D(e_\alpha a,b)\\
 &=&\lim_\alpha[e_\alpha D(a,b)+D(e_\alpha,b)a]\\
 &=&0.
 \end{array}$$
That is $BZ^1(A,X)=0$.
 \end{proof}
A very similar proof may be applied if  $A$ has a  right approximate identity and the left module action is trivial.\\
The following lemma introduces a condition  that under which some biderivations are inner. This result will lead to a condition implying biamenability of a Banach algebra. We shall see  that $B(H)$, for each  infinite dimensional Hilbert space $H$, satisfies this condition.
 \begin{lemma}\label{lem3}
 If $A$ is unital and $A=span\{ab-ba: a,b\in A\}$, then for every  unital Banach $A$-bimodule $X$, every  biderivation $D:A\times A\rightarrow X$ is an inner  biderivation.
 \end{lemma}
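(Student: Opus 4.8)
The plan is to produce an explicit central element $x \in Z(A,X)$ with $D = \Delta_x$, built directly from $D$ together with the hypothesis that the unit is a sum of commutators. Write $e$ for the identity of $A$. Since $A = \mathrm{span}\{ab-ba : a,b \in A\}$ and any scalar multiple satisfies $\lambda[a,b] = [\lambda a, b]$, I can fix a finite representation $e = \sum_{i=1}^{n}[a_i,b_i]$. The candidate is then
$$x := \sum_{i=1}^{n} D(a_i,b_i) \in X.$$

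First I would show that $D(c,d) = x[c,d]$ for all $c,d \in A$. Applying Lemma \ref{lem1} with the first pair equal to $(a_i,b_i)$ gives $D(a_i,b_i)[c,d] = [a_i,b_i]D(c,d)$; summing over $i$, using $\sum_i [a_i,b_i] = e$ and the unitality of $X$ (so that $e\cdot D(c,d) = D(c,d)$), yields $x[c,d] = D(c,d)$. Symmetrically, applying Lemma \ref{lem1} with the \emph{second} pair equal to $(a_i,b_i)$ gives $D(a,b)[a_i,b_i] = [a,b]D(a_i,b_i)$; summing and using $D(a,b)\cdot e = D(a,b)$ produces $D(a,b) = [a,b]x$. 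Thus $D$ admits the two representations $D(a,b) = x[a,b] = [a,b]x$ for all $a,b \in A$.

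It then remains to check that $x \in Z(A,X)$, i.e. that $x$ commutes with every element of $A$. The two representations just obtained give $x[a,b] = [a,b]x$ for all $a,b$, so $x$ commutes with every commutator; by linearity it commutes with $\mathrm{span}\{[a,b]\} = A$, which is exactly the assertion $x \in Z(A,X)$. Consequently $D(a,b) = x[a,b] = \Delta_x(a,b)$, so $D = \Delta_x$ is an inner biderivation, as required.

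I do not anticipate a genuine obstacle: the whole argument is purely algebraic (no approximation is needed, in contrast to Lemma \ref{lem2}), and the only real idea is to exploit the decomposition of the unit as a sum of commutators in order to turn the bilinear identity of Lemma \ref{lem1} into the two one-sided factorizations $D(a,b) = [a,b]x$ and $D(a,b) = x[a,b]$. The single point to handle with care is the use of unitality of $X$: it is invoked precisely to collapse $e\cdot D(c,d)$ and $D(a,b)\cdot e$ to $D(c,d)$ and $D(a,b)$, and without it the candidate $x$ would fail to reproduce $D$.
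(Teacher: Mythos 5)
Your proposal is correct and is essentially the paper's own proof: both take $x=\sum_i D(a_i,b_i)$ built from a decomposition $e=\sum_i[a_i,b_i]$, apply Lemma \ref{lem1} in both slots together with unitality of $X$ to get $D(a,b)=x[a,b]=[a,b]x$, and then use $A=\mathrm{span}\{[a,b]\}$ to conclude $x\in Z(A,X)$ and $D=\Delta_x$.
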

 \begin{proof}
 Let $D$ be a biderivation and $e$ be the identity of $A$. Since $A=span\{ab-ba: a,b\in A\}$,  there exist  $a_i$ and $b_i$ in $A$ such that $e=\sum_{i}[a_i,b_i]$. So by Lemma \ref{lem1}, for every $a, b\in A$, we have
 $$\begin{array}{rcl}
 D(a,b)&=& D(a,b)e\\
&=& D(a,b)\sum_{i}[a_i,b_i]\\
 &=& \sum_{i}D(a,b)[a_i,b_i]\\
 &=&\sum_i[a,b]D(a_i,b_i)\\
 &=&[a,b]\lambda.
 \end{array}$$
Where $\lambda= \sum_{i}D(a_i,b_i)$. Similarly we have $D(a,b)=\lambda [a,b],$  and so $$\lambda [a,b]=[a,b]\lambda\hspace{1cm}(a\in A, b\in B).$$
Now  since $A=span\{ab-ba: a,b\in A\}$, $\lambda\in Z(A,X)$.  So $D=\Delta_\lambda$ is an inner  biderivation. 
 \end{proof}
 \begin{lemma}\label{lem4}
 If a Banach algebra $A$ has  a bounded  approximate identity  and $span\{ab-ba: a,b\in A\}$ is dense in $A$,
  then the following statements are equivalent.
\begin{itemize}
 \item[(i)] $A$ is biamenable.
 \item[(ii)] $BH^1(A,X^*)=0$, for every left approximately unital Banach $A$-bimodule $X$.
 \item[(iii)] $BH^1(A,X^*)=0$, for every right approximately unital Banach $A$-bimodule $X$.
 \item[(iv)] $BH^1(A,X^*)=0$, for every  approximately unital Banach $A$-bimodule $X$.
 \end{itemize}
 \end{lemma}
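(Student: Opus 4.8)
The plan is to reduce the four-way equivalence to the single nontrivial implication $(iv)\Rightarrow(i)$. Every approximately unital bimodule is simultaneously left and right approximately unital, and each of the restricted classes in $(ii)$--$(iv)$ sits inside the class of all Banach $A$-bimodules; hence $(i)\Rightarrow(ii)\Rightarrow(iv)$ and $(i)\Rightarrow(iii)\Rightarrow(iv)$ are immediate from these inclusions. Thus it remains to show that, under the standing hypotheses, the vanishing of $BH^1$ on all approximately unital dual modules forces $A$ to be biamenable.

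For $(iv)\Rightarrow(i)$ I would fix an arbitrary Banach $A$-bimodule $X$ and a biderivation $D:A\times A\to X^{*}$, and show $D$ is inner. Since $A$ has a bounded approximate identity $(e_\alpha)$, Cohen's factorization theorem guarantees that $X\cdot A$ and $A\cdot X\cdot A$ are closed sub-bimodules, giving a filtration $A\cdot X\cdot A\subseteq X\cdot A\subseteq X$ in which $X_0:=A\cdot X\cdot A$ is approximately unital, the net $(e_\alpha)$ acting as a two-sided approximate identity on it. Dualizing the two inclusions produces surjective bimodule morphisms $X^{*}\xrightarrow{q_1}(X\cdot A)^{*}\xrightarrow{q_2}X_0^{*}$. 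The quotient $X/(X\cdot A)$ has trivial right action, so $\ker q_1=(X\cdot A)^{\perp}$ has trivial left action; likewise $(X\cdot A)/X_0$ has trivial left action, so $\ker q_2$ has trivial right action. By Lemma~\ref{lem2} and its right-handed analogue, biderivations into either kernel vanish, while $(iv)$ applied to $X_0$ shows every biderivation into $Q:=X_0^{*}$ is inner.

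The heart of the argument is a gluing step, which I would isolate thus: if $0\to K\to M\xrightarrow{q}Q\to0$ is a short exact sequence of dual Banach $A$-bimodules with $BZ^{1}(A,K)=0$ and $K$ having a one-sided trivial action, and if $E:A\times A\to M$ is a biderivation with $q\circ E$ inner, then $E$ is inner. The main obstacle is that a central element of $Q$ need not admit a central lift to $M$, so subtracting a naive $\Delta_{\mu}$ fails (for noncentral $\mu$ the map $\Delta_{\mu}$ is not even a biderivation). I would overcome this by \emph{constructing} a central lift: writing $q\circ E=\Delta_{\lambda}$ with $\lambda\in Z(A,Q)$, pick any Hahn--Banach lift $\tilde\Lambda\in M$ of $\lambda$; then the defect $c(a):=a\cdot\tilde\Lambda-\tilde\Lambda\cdot a$ lands in $K$ and is a derivation, which, because $K$ has a trivial one-sided action, is a one-sided module map. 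Using that the one-sided actions on a dual module are weak*-continuous and that $(e_\alpha)$ is bounded, I would take a weak*-cluster point $k$ of the bounded net $\big(c(e_\alpha)\big)$ in the weak*-closed submodule $K$ and check $c=\delta_{k}$ (up to sign); then $\Lambda:=\tilde\Lambda-k$ is central with $q(\Lambda)=\lambda$. Consequently $E-\Delta_{\Lambda}$ is a genuine biderivation annihilated by $q$, hence valued in $K$, hence zero, so $E=\Delta_{\Lambda}$ is inner.

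Finally I would apply this gluing lemma twice: first to $0\to\ker q_2\to(X\cdot A)^{*}\xrightarrow{q_2}X_0^{*}\to0$, where $\ker q_2$ has trivial right action, to upgrade $q_1\circ D$ to an inner biderivation on $(X\cdot A)^{*}$; then to $0\to\ker q_1\to X^{*}\xrightarrow{q_1}(X\cdot A)^{*}\to0$, where $\ker q_1$ has trivial left action, to conclude that $D$ itself is inner, completing $(iv)\Rightarrow(i)$. The only genuinely delicate points are the weak*-limit construction of the central lift and the weak*-continuity of the dual actions it relies on; everything else is bookkeeping with Cohen factorization and the module operations.
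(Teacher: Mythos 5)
Your proposal is correct, and it follows the same broad strategy as the paper --- Cohen factorization to pass to an essential submodule, Lemma~\ref{lem2} and its mirror to kill the degenerate pieces, and subtraction of an inner biderivation built from a central element --- but it is organized around a genuinely different decomposition and proves a different key implication. The paper proves $(ii)\Rightarrow(i)$ using the single submodule $X_0=AX$, declares $(i)\Leftrightarrow(iii)$ ``similar'' and $(i)\Leftrightarrow(iv)$ ``obvious''; note that $(iv)\Rightarrow(i)$ is \emph{not} a formal consequence of the two proved equivalences (it requires at least one more pass of the same machinery, since $(iv)$ is the weakest vanishing hypothesis), so your choice to establish $(iv)\Rightarrow(i)$ directly via the two-step filtration $A\cdot X\cdot A\subseteq X\cdot A\subseteq X$ and two applications of your gluing lemma actually closes a step the paper waves at. The second real difference is the central-lift issue, which is the heart of both arguments: the paper extends $\phi_0\in Z(A,X_0^*)$ to $\phi\in Z(A,X^*)$ by appealing to a decomposition ``$X^*=X_0^*\oplus X_0^\perp$'' attributed to Theorem 4.9 of Rudin, but that theorem only identifies $X_0^*$ with $X^*/X_0^\perp$; a Hahn--Banach extension of a central functional need not be central, so some correction argument is needed. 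Your defect-correction construction --- take any lift $\tilde\Lambda$, observe $c(a)=a\tilde\Lambda-\tilde\Lambda a$ is a derivation into the kernel, and correct by a weak$^*$-cluster point of $(c(e_\alpha))$ using weak$^*$-continuity of the dual actions --- is a genuine proof of exactly this point (one can alternatively show that a weak$^*$-cluster point of $(\phi_0\circ L_{e_\alpha})$ is itself central, using $ae_\alpha-e_\alpha a\to 0$; either way, an argument is required). Your flagged sign ambiguity is real but harmless: with trivial left action on $K$ one gets $c=-\delta_k$ and must take $\Lambda=\tilde\Lambda+k$, with trivial right action $c=\delta_k$ and $\Lambda=\tilde\Lambda-k$. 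In short, your route costs a two-step filtration and an extra lemma, and buys a complete proof of the one implication the paper leaves implicit together with a rigorous treatment of the central-extension step the paper elides.
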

 \begin{proof}
We only prove (i) is equivalent to (ii). The equivalence of (i) and (iii) is similar and then the equivalence of (i) and (iv) is obvious.\\
Clearly if $A$ is biamenable then (ii) is true. Now let  $BH^1(A,Y^*)=0$, for every left approximately unital Banach $A$-bimodule $Y$ and let $X$ be a Banach $A$-bimodule. Then Corollary 2.9.26 of \cite{D} implies that $X_0=AX$ is a left approximately unital closed submodule of $X$. Also $A(\frac{X}{X_0})=0$ and so $(\frac{X}{X_0})^*A=0$. Therefore Lemma \ref{lem2} says that $BZ^1(A,X_0^\perp)= BZ^1(A,(\frac{X}{X_0})^*)=0$.\\
Let $D\in BZ^1(A,X^*)$ and $J:X_0\rightarrow X$ be the inclusion mapping. Then $J^*\circ D\in BZ^1(A,X_0^*)$ and by assumption $J^*\circ D=\Delta_{\phi_0}$, for some $\phi_0\in Z(A,X_0^*)$. Now the equation $X^*=X_0^*\oplus X_0^\perp$, which is implied from Theorem 4.9 of \cite{R}, shows that there exists an extension $\phi$ of $\phi_0$ such that $\phi\in Z(A,X^*)$. \\
Define $D_0=D-\Delta_\phi$. Then $D_0\in BZ^1(A, X_0^\perp)=0$ and so $D=\Delta_\phi$.
 \end{proof}
 A similar result of the previous  lemma in the area of amenability is given in Proposition 2.1.5 of \cite{Ru}.\\
 Now  combination of the Lemmas \ref{lem3} and \ref{lem4} gives the following theorem that leads to a condition for biamenability of Banach algebras and then we can find some examples of biamenable Banach algebras which are not amenable. 
 \begin{theorem}\label{cor}
Each unital Banach algebra  $A$ with $A=span\{ab-ba: a,b\in A\}$, is biamenable.
 \end{theorem}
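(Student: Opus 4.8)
The plan is to verify the standing hypotheses of Lemma \ref{lem4} and then feed the modules it supplies into Lemma \ref{lem3}. First I would observe that a unital Banach algebra automatically meets the hypotheses of Lemma \ref{lem4}: the constant net consisting of the identity $e$ is a bounded approximate identity, and the assumption $A=span\{ab-ba:a,b\in A\}$ makes this span trivially dense in $A$. Consequently, by the equivalence of (i) and (iv) in Lemma \ref{lem4}, it suffices to establish that $BH^1(A,X^*)=0$ for every approximately unital Banach $A$-bimodule $X$.

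The bridge between the two lemmas is the remark that, for a \emph{unital} algebra, approximate unitality of a module coincides with unitality. Indeed, if $(e_\alpha)$ is the bounded approximate identity of $A$ witnessing that $X$ is approximately unital, then $e_\alpha=e_\alpha e\to e$ in norm, so passing to the limit in $e_\alpha x\to x$ and $xe_\alpha\to x$ and using continuity of the module actions yields $ex=xe=x$ for every $x\in X$; thus $X$ is a unital $A$-bimodule. I would then transfer this to the dual: from $ex=xe=x$ together with the defining formulas $(e.f)(x)=f(xe)$ and $(f.e)(x)=f(ex)$, one reads off $e.f=f.e=f$ for all $f\in X^*$, so that $X^*$ is itself a unital Banach $A$-bimodule.

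With $X^*$ unital, Lemma \ref{lem3} applies verbatim (again using $A=span\{ab-ba:a,b\in A\}$): every biderivation $D:A\times A\to X^*$ is inner, i.e. $BZ^1(A,X^*)=BN^1(A,X^*)$, which is precisely $BH^1(A,X^*)=0$. Since $X$ was an arbitrary approximately unital module, this is exactly condition (iv) of Lemma \ref{lem4}, and hence $A$ is biamenable.

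I do not anticipate a genuine obstacle, as the statement is engineered to be a corollary of the two preceding lemmas; the only point requiring care is the identification of approximately unital modules with unital ones, which rests on the fact that a bounded approximate identity of a unital algebra converges in norm to $e$. The one thing to be vigilant about is that a purely one-sided approximate unitality would give only one-sided unitality of $X^*$, so it is important to invoke the two-sided statement (iv) of Lemma \ref{lem4} rather than (ii) or (iii), ensuring that $X^*$ is unital on both sides before applying Lemma \ref{lem3}.
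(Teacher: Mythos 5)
Your proof is correct and takes essentially the same route as the paper, which derives the theorem precisely by combining Lemma \ref{lem3} and Lemma \ref{lem4}. The paper leaves the combination implicit, whereas you usefully spell out the bridging step (a bounded approximate identity of a unital algebra converges in norm to the identity, so approximately unital bimodules and their duals are unital), which is exactly the verification needed.
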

\begin{corollary}
If  $A=span\{ab-ba: a, b\in A\}$  and $A$ has an identity, then the only biderivation $D:A\times A\rightarrow A^*$ is zero.
\end{corollary}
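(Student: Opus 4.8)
The plan is to combine the innerness conclusion of Lemma~\ref{lem3} with an explicit computation of the space $Z(A,A^*)$, which will turn out to be trivial precisely because $A$ is spanned by its commutators.

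First I would observe that, since $A$ is unital with identity $e$, its dual $A^*$ is automatically a \emph{unital} Banach $A$-bimodule: writing the dual actions as in the introduction, for every $f\in A^*$ and $x\in A$ one has $(e\cdot f)(x)=f(xe)=f(x)$ and $(f\cdot e)(x)=f(ex)=f(x)$, so $e$ acts as the identity on both sides. Consequently Lemma~\ref{lem3} applies with $X=A^*$, and every biderivation $D:A\times A\to A^*$ must be inner, say $D=\Delta_\lambda$ for some $\lambda\in Z(A,A^*)$.

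The decisive step is then to identify $Z(A,A^*)$. By definition, $\lambda\in Z(A,A^*)$ means $a\cdot\lambda=\lambda\cdot a$ for all $a\in A$; evaluating at an arbitrary $x\in A$ and using the dual actions, this reads $\lambda(xa)=\lambda(ax)$, i.e. $\lambda([x,a])=0$ for all $a,x\in A$. Since by hypothesis $A=\mathrm{span}\{ab-ba:a,b\in A\}$, every element of $A$ is a finite linear combination of such commutators, and hence $\lambda$ vanishes on all of $A$; that is, $\lambda=0$, so $Z(A,A^*)=\{0\}$.

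Therefore $D=\Delta_\lambda=\Delta_0=0$, which completes the argument. The only genuine content beyond invoking Lemma~\ref{lem3} is the observation that the commutator-spanning condition forces $Z(A,A^*)=\{0\}$, and I expect no real obstacle here: it is a direct unwinding of the definitions of the dual module actions together with the defining hypothesis on $A$.
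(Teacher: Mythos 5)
Your proof is correct and takes essentially the same route as the paper: where the paper invokes Theorem~\ref{cor} (biamenability of $A$) to conclude that $D$ is inner, you apply Lemma~\ref{lem3} directly after checking that $A^*$ is a unital $A$-bimodule, which amounts to the same thing. Both arguments then finish identically, by observing that any $\lambda\in Z(A,A^*)$ satisfies $\lambda([x,a])=\lambda(xa)-\lambda(ax)=0$, so the commutator-spanning hypothesis forces $\lambda=0$ and hence $D=\Delta_\lambda=0$.
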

\begin{proof}
Let $D:A\times A\rightarrow A^*$ be a biderivation. By Theorem \ref{cor} $A$ is  biamenable, so $D$ is inner. That is there exists an $f\in Z(A,A^*)$ such that for every $a, b\in A$, $D(a, b)=f[a, b]$.
Now $$\langle f,[a,b]\rangle=\langle fa-af,b\rangle=0.$$
Hence our assumption implies that $f=0$ and so $D=0$.
\end{proof}
Every bounded bilinear mapping $f: X\times Y\rightarrow Z$  on normed spaces $X, Y$ and $Z$,  has two natural extensions $f^{***}$ and  $f^{t***t}$ from $X^{**}\times Y^{**}$  to $Z^{**}$ as follows. \\
We define the adjoint  $ f^*:{Z}^*\times {X} \rightarrow {Y}^*$ of $f$  by \[\langle f^*(z^*, x),y\rangle=\langle z^*,f(x,y)\rangle,\] where $x\in {X}, y\in {Y}\ \ {\rm and}\ \  z^*\in {Z}^*.$
We then define 
  $f^{**}=(f^*)^*$ and $f^{***}=(f^{**})^*$. 
 Let  $f^{t}: {Y} \times {X}
\longrightarrow {Z}$ be the flip map of $f$  which is defined by $f^{t}(y,x)= f(x,y)\  ( x \in {X}, y
\in {Y}).$   If we continue the latter process with $f^t$ instead of $f$, we come to the bounded bilinear mapping $f^{t***t}: {X}^{**}\times {Y}^{**}\rightarrow Z^{**}$.\\
Where $\pi$ is the multiplication of a Banach algebra $A$, $\pi^{***}$ and $\pi^{t***t}$ are actually the first and second Arens products,  which are denoted by
 $\square$ and $\lozenge$, respectively.  For more detailes see \cite{A} and \cite{Ar}.
 
 Now we give some examples of biamenable Banach algebras.
\begin{example}\label{ex}
According to Lemma 5.8 of \cite{Z}, since for each infinite dimensional Hilbert space $H$ and every integer $n\geq 0$,
$$B(H)^{(2n)}=span\{au-ua: a\in B(H), u\in B(H)^{(2n)}\}.$$
So Theorem \ref{cor} help us to find some biamenable Banach algebras such as the Banach algebra
$B(H)^{(2n)}$ and the module extension Banach algebra $B(H)\oplus B(H)^{(2n)}$ with the  product and norm as follows. $$(a,u)(b,v)=(ab,av+ub),\ \ \ \ \|(a,u)\|=\|a\|+\|u\|, \ \ \ \ \ (a\in B(H), u\in B(H)^{(2n)}).$$  Although $B(H)$ is not amenable in general.
Note that since  $\{au-ua: a\in B(H), u\in B(H)^{(2n)}\}$ is a subset of  $\{uv-vu: u, v\in B(H)^{(2n)}\}$ and $\{[a,v]-[b,u]: a,b\in B(H), u,v\in B(H)^{(2n)}\}$.
Therefore the commutators span the whole of $B(H)^{(2n)}$ and $B(H)\oplus B(H)^{(2n)}$. \\
 Also similar to  last corollary we can show that  the only biderivation from $B(H)\times B(H)$ to $B(H)^{(2n+1)}$ is zero. 
 
A similar method as  Lemma 5.7 of \cite{Z}, can show that for the Banach algebra $K(H)$ of compact operators on $H$, $$K(H)=span\{ku-uk: k\in K(H): u\in B(H)\}.$$ So similarly $B(H)\oplus K(H)$ is biamenable. Although Remark 5.10 of \cite{Z} says that it is not amenable.
\end{example}
Let $G$ be a locally compact group. $m\in L^\infty(G)^*$ is a mean on $L^\infty(G)$ if $m(1)=\|m\|=1$. A mean $m$ on $L^\infty(G)$ is called a left invariant mean if for each $x\in G$ and $g\in L^\infty(G)$, $m(\delta_x*g)=m(g)$.
$G$ is called amenable if there is a left invariant mean on $L^\infty(G)$.\\
Consider $L^\infty(G)$ as an $L^1(G)$-bimodule with the left and right module actions
$$\pi_\ell:L^1(G)\times L^\infty(G) \rightarrow L^\infty(G)\quad \pi_r:L^\infty(G)\times L^1(G)\rightarrow L^\infty(G),$$ defined by $\pi_\ell(f,g)=f*g$ and  $\pi_r(g,f)=(\int_G f) g$. Then we have the following proposition.
\begin{proposition}
Let $G$ be a locally compact group such that $Z(L^1(G),L^\infty(G)^*)$ containes an element $n$ such that $n(1)\neq 0$. Then $G$ is amenable.
\end{proposition}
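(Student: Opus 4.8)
The plan is to decode the centrality condition into a topological invariance property and then recognize the resulting functional as (essentially) a topologically left invariant mean. First I would unwind the dual–module actions. Applying the formulas $a\cdot f(x)=f(xa)$ and $f\cdot a(x)=f(ax)$ with $A=L^1(G)$, $X=L^\infty(G)$, together with $\pi_\ell(f,g)=f*g$ and $\pi_r(g,f)=(\int_G f)g$, one computes for $n\in L^\infty(G)^*$, $f\in L^1(G)$, $g\in L^\infty(G)$ that $(f\cdot n)(g)=(\int_G f)\,n(g)$ and $(n\cdot f)(g)=n(f*g)$. Hence $n\in Z(L^1(G),L^\infty(G)^*)$ says precisely that
$$n(f*g)=\Big(\int_G f\Big)\,n(g)\qquad(f\in L^1(G),\ g\in L^\infty(G)),$$
i.e.\ $n$ is a topologically left invariant functional on $L^\infty(G)$. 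Since $n(1)\neq 0$, I replace $n$ by $n/n(1)$ so that $n(1)=1$.

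Next I would reduce the conclusion to the construction of a topologically left invariant mean. Recall the standard equivalence that $G$ is amenable if and only if $L^\infty(G)$ admits a topologically left invariant mean, that is, an $m\in L^\infty(G)^*$ with $m\ge 0$, $m(1)=\|m\|=1$, and $m(f*g)=(\int_G f)m(g)$ for all probability densities $f$ (nonnegative $f\in L^1(G)$ with $\int_G f=1$). Thus it suffices to manufacture, from the invariant functional $n$, a \emph{positive} invariant functional that is nonzero at the constants; normalizing it then yields the required mean.

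The hard part will be extracting positivity, since $n$ itself need be neither positive nor of norm one. My approach is a Jordan–decomposition argument. For a probability density $f$ the averaging operator $T_f\colon g\mapsto f*g$ is a positive, unital, conjugation–preserving map on the commutative C*-algebra $L^\infty(G)$; its adjoint $T_f^*$ therefore commutes with the natural involution on $L^\infty(G)^*$ and fixes $n$. Consequently the Hermitian part $n_1=\tfrac12(n+n^*)$ is again invariant and satisfies $n_1(1)=1$. Writing its Jordan decomposition $n_1=p-q$ with $p,q\ge 0$ one has $p(1)-q(1)=1$, so $p\neq 0$, and the candidate mean is $m=p/p(1)$. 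The genuine obstacle is to verify that the positive part $p$ is still invariant: this is automatic when the maps $T_f$ are $*$-automorphisms, as happens for discrete $G$, where the invariance $n(f*g)=(\int_G f)n(g)$ reduces, on taking $f$ to be point masses, to left–translation invariance, and uniqueness of the Jordan decomposition under automorphisms finishes the argument. For a general locally compact group the averaging operators are not multiplicative, so preserving invariance of $p$ requires the structure theory of topologically invariant functionals; I would therefore invoke the standard fact that the existence of a nonzero topologically left invariant functional not vanishing on the constants forces the existence of a topologically left invariant mean. Combined with the equivalence quoted above, this gives that $G$ is amenable.
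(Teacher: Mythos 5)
Your first two steps are correct and match the paper's strategy: unwinding the dual module actions shows that membership in $Z(L^1(G),L^\infty(G)^*)$ is exactly topological left invariance, $n(f*g)=(\int_G f)\,n(g)$, and the problem reduces to producing a topologically left invariant \emph{mean}. The genuine gap is in your last step. Having observed (correctly) that your Jordan-decomposition argument, as you set it up, only closes for discrete $G$, you fill the hole for general $G$ by ``invoking the standard fact that the existence of a nonzero topologically left invariant functional not vanishing on the constants forces the existence of a topologically left invariant mean.'' But that ``fact'' \emph{is} the proposition you are asked to prove: by your own first step, such functionals are precisely the elements $n\in Z(L^1(G),L^\infty(G)^*)$ with $n(1)\neq 0$, and the existence of a topologically left invariant mean is equivalent to amenability. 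So the argument is circular at exactly the point where the real work (extracting positivity) has to happen, and as written it is not a proof.

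The irony is that your Jordan-decomposition idea does work in full generality; what you were missing is the right form of the uniqueness theorem, which does not require the averaging operators to be $*$-automorphisms. For a Hermitian functional $\mu$ on the C*-algebra $L^\infty(G)$, the decomposition $\mu=\mu^+-\mu^-$ with $\mu^\pm\ge 0$ and $\|\mu\|=\|\mu^+\|+\|\mu^-\|$ is \emph{unique}. Now for $f\in P(G)$ the operator $T_f g=f*g$ is positive and unital ($f*1=1$), so $T_f^*$ carries positive functionals to positive functionals and preserves their norms, since $\|T_f^*p\|=(T_f^*p)(1)=p(f*1)=p(1)=\|p\|$. Hence $n_1=T_f^*n_1=T_f^*p-T_f^*q$ is again a decomposition into positive parts whose norms add up to $\|n_1\|$, and uniqueness forces $T_f^*p=p$ and $T_f^*q=q$. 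Since $p(1)=n_1(1)+q(1)\ge 1$, the functional $m=p/p(1)$ is a topologically left invariant mean, and $G$ is amenable. Completing your argument this way would in fact improve on the paper's own proof, which obtains positivity by setting $|n|(g)=|n(g)|$; that pointwise absolute value is not a linear functional, so the Hermitian-part-plus-Jordan route you began is the correct repair, provided you finish it with the uniqueness argument above rather than with the circular citation.
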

\begin{proof}
Define $|n|$ by $|n|(\phi)=|n(g)|$, for each $g\in L^\infty(G)$. Then $m=\frac{|n|}{|n(1)|}$ is a positive element of $Z(L^1(G),L^\infty(G)^*)$ such that $m(1)=1$. Therefore by {\cite[Proposition 1.1.2]{Ru}}, $m$ is a mean on $L^\infty(G)$.\\
Now we have 
\begin{align*}
\langle m,f*g\rangle&=\langle m,\pi_\ell(f,g)\rangle\\
&=\langle \pi_\ell^*(m,f),g\rangle\\
&=\langle\pi_r^{t*t}(f,m),g\rangle\\
&=\langle m,\pi_r(g,f)\rangle\\
&=\langle m,g\rangle\int_Gf\\
&=\langle m,g\rangle\quad (f\in P(G), g\in L^\infty(G)).\end{align*}
Where $P(G)=\{f\in L^1(G); \|f\|_1=1, f\geq 0\}$. So  {\cite[Lemma 1.1.7]{Ru}} implies that $G$ is amenable.
\end{proof}
A big class of Banach algebras are not biamenable, although they may be amenable.
For example if there exists a non-zero  derivation  $d:A\rightarrow A^{**}$  on a commutative Banach algebra  $A$ such that for some $a, b\in A$, $d(a) \square d(b)\neq 0$, then  $A$ is not biamenable.
Since the map 
$$\begin{array}{rcl}
D:A \times A&\longrightarrow& A^{**}\\
(a,b)&\mapsto& d(a)\square d(b)
\end{array}$$ 
where $\square $ denotes the first Arens product of $A^{**}$, defines a biderivation which is not inner. Also
if there is a Banach $A$-bimodule $X$ such that $Z(A,X)=\{0\}$ and there is a non zero  biderivation from $A\times A$ into  $X^*$, then $A$ is not  biamenable.
Since in this case $Z(A,X^*)=\{0\}$ and so the  only inner biderivation $D:A\times A\rightarrow X^*$ is zero and therefore we have the following proposition.
\begin{proposition}\label{pr3}
Let $\sigma(A)$ be the spectrum of $A$. If $\sigma(A)\neq\emptyset$, then $A$ is not biamenable.
\end{proposition}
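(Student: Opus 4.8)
The plan is to produce a single Banach $A$-bimodule whose dual carries a biderivation that cannot be inner, and then to invoke the criterion stated in the paragraph just before the proposition. Since $\sigma(A)\neq\emptyset$, fix a nonzero character $\varphi\in\sigma(A)$; as a nonzero multiplicative linear functional on a Banach algebra it is automatically bounded. The idea is to use $\varphi$ both to build a bimodule with trivial centre and to write down an explicit nonzero biderivation out of it.

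Concretely, I would take $X=\mathbb{C}$ equipped with the one-sided actions $a\cdot\lambda=0$ and $\lambda\cdot a=\varphi(a)\lambda$ for $a\in A$ and $\lambda\in\mathbb{C}$. The bimodule axioms are immediate from the multiplicativity of $\varphi$, and since $\varphi\not\equiv 0$ we get $Z(A,X)=\{\lambda:\varphi(a)\lambda=0\ \forall a\in A\}=\{0\}$. Passing to the dual with the conventions $a\cdot f(x)=f(x\cdot a)$ and $f\cdot a(x)=f(a\cdot x)$ recorded in the introduction, one computes that $X^{*}=\mathbb{C}$ carries the actions $a\cdot f=\varphi(a)f$ and $f\cdot a=0$, so that $Z(A,X^{*})=\{0\}$ as well. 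This last equality is the crucial point: by the criterion preceding the proposition, the only inner biderivation from $A\times A$ into $X^{*}$ is the zero map, so it suffices to exhibit one nonzero biderivation there.

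For that I would simply set $D(a,b)=\varphi(a)\varphi(b)$, which is a bounded bilinear map into $X^{*}=\mathbb{C}$. Using the module structure of $X^{*}$ just computed, a direct check shows that $D$ is a derivation in each variable: for the first variable $D(aa',b)=\varphi(a)\varphi(a')\varphi(b)=a\cdot D(a',b)+D(a,b)\cdot a'$, and the second variable is entirely analogous, the key input in both lines being $\varphi(aa')=\varphi(a)\varphi(a')$. Since $\varphi\neq 0$ there is $a\in A$ with $\varphi(a)\neq0$, whence $D(a,a)=\varphi(a)^{2}\neq0$ and $D$ is nonzero, hence not inner; therefore $BH^{1}(A,X^{*})\neq0$ and $A$ is not biamenable. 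The only step demanding any thought is the choice of module: making exactly one side of the action trivial is what forces $Z(A,X^{*})=\{0\}$, while the multiplicativity of $\varphi$ is precisely what turns the map $(a,b)\mapsto\varphi(a)\varphi(b)$ into a biderivation; once these two requirements are balanced the rest is routine verification.
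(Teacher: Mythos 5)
Your proposal is correct and follows essentially the same route as the paper: the paper also takes the bimodule with actions $ax=0$, $xa=\varphi(a)x$ (so that $Z(A,X)=Z(A,X^*)=\{0\}$), invokes the criterion stated just before the proposition, and exhibits the biderivation $D(a,b)=\varphi(a)(bh-hb)=\varphi(a)\varphi(b)h$ for a nonzero $h\in X^*$, which is exactly your $D(a,b)=\varphi(a)\varphi(b)$ once you specialize $X$ to $\mathbb{C}$ and $h$ to $1$.
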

\begin{proof}
Let $A$ be a Banach algebra such that $\sigma(A)\neq \emptyset$, $f$ be an element in $\sigma(A)$ and $X$ be a non zero Banach $A$-bimodule with module actions $$ax=0,  \ \ \ xa=f(a) x\hspace{1cm}(a\in A, x\in X).$$ Then $Z(A,X)=\{0\}$. But for a non-zero element $h\in X^*$ 
$$\begin{array}{rcl}
D:A\times A&\rightarrow& X^*\\
(a,b)&\mapsto&f(a) (bh-hb)
\end{array} $$
is a non-zero biderivation. 
\end{proof}
Now by applying  Proposition \ref{pr3} and Theorem 1.3.3 of \cite{M}, we conclude that every unital commutative Banach algebra is not biamenable. For example $\mathbb{C}$, $\mathbb{R}$, $C(\Omega)$,  for each Hausdorff space $\Omega$ and  the group algebra $M(G)$, for each locally compact abelian group $G$ are not biamenable. In the next section we extend this result 
to arbitrary commutative Banach algebras. Also a combination of Example \ref{ex} and Proposition
\ref{pr3} implies the following.
\begin{corollary}
For each integer $n\geq 0$ and each infinite dimensional Hilbert space $H$, $\sigma(B(H)^{(2n)})=\emptyset$ and  $\sigma(B(H)\oplus B(H)^{(2n)})=\emptyset$.
\end{corollary}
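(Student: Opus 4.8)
The plan is to read the statement as the contrapositive combination of the two immediately preceding results. First I would record the key reformulation of Proposition \ref{pr3}: taking contrapositives, \emph{if a Banach algebra $A$ is biamenable, then $\sigma(A)=\emptyset$}. Thus it suffices to prove that the two algebras in question are biamenable, after which the conclusion about the spectrum is automatic.

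Next I would verify that both $B(H)^{(2n)}$ and $B(H)\oplus B(H)^{(2n)}$ satisfy the hypotheses of Theorem \ref{cor}, namely that each is unital and equals the linear span of its own commutators. For $B(H)^{(2n)}$ the commutator identity is exactly the content of Example \ref{ex} (via Lemma 5.8 of \cite{Z}), and unitality holds because the canonical image of the identity of $B(H)$ is a unit for the Arens product on every even dual. For the module extension $B(H)\oplus B(H)^{(2n)}$, the pair $(e,0)$ with $e$ the identity of $B(H)$ is a unit, since $B(H)^{(2n)}$ is a unital $B(H)$-bimodule, and Example \ref{ex} already observes that the commutators span the whole algebra. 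Hence Theorem \ref{cor} yields biamenability of both algebras, and the reformulated Proposition \ref{pr3} then forces $\sigma(B(H)^{(2n)})=\emptyset$ and $\sigma(B(H)\oplus B(H)^{(2n)})=\emptyset$.

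Since every nontrivial step has already been established earlier in the paper, there is no genuine analytic obstacle here; the substance of the result lives in Example \ref{ex} and Proposition \ref{pr3}, and this corollary is their immediate juxtaposition. The only point requiring any care is bookkeeping: checking that the unitality and commutator-span hypotheses of Theorem \ref{cor} are truly met by the module extension algebra, and reading Proposition \ref{pr3} correctly in its contrapositive form.
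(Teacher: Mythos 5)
Your proposal is correct and follows exactly the paper's route: the paper states this corollary as ``a combination of Example \ref{ex} and Proposition \ref{pr3}'', i.e.\ biamenability of $B(H)^{(2n)}$ and $B(H)\oplus B(H)^{(2n)}$ via Theorem \ref{cor} together with the contrapositive of Proposition \ref{pr3}. Your only addition is the explicit bookkeeping of the unitality hypotheses (the canonical image of the identity under the Arens product, and $(e,0)$ for the module extension), which the paper leaves implicit in Example \ref{ex}.
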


\begin{example}
Let $A$ be a Banach space and $\theta$ be a non zero element of $A^*$. Then $A$ is a Banach algebra with the multiplication
$$ab=\theta(a) b,\hspace{1cm}(a, b\in A).$$
Now since $\theta\in \sigma(A)$, $A$ with this multiplication is not biamenable.
\end{example}
\section{Some properties}
In this section we study some properties of biamenable Banach algebras and we tend to some another examples of non biamenable Banach algebras.
\begin{theorem}
 Let $A$ be a Banach algebra and consider $\mathbb{C}$ as a Banach $A$-bimodule. If there is a nonzero derivation $d : A \rightarrow\mathbb{C}$, then  biamenability of $A$ implies  amenability of $A$. 
 \end{theorem}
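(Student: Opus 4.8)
The plan is to observe that the two hypotheses are actually in tension: a nonzero derivation into $\mathbb{C}$ is itself an obstruction to biamenability, so the implication will hold vacuously. Before committing to this, let me record the route one naively expects. Given an arbitrary Banach $A$-bimodule $X$ and a derivation $D:A\to X^*$, one would like to manufacture a biderivation from $D$ and $d$ and then invoke biamenability to force it, and hence $D$, to be inner. The obvious candidate is $\tilde D(a,b)=d(a)D(b)$: this is automatically a derivation in the second variable (it is the scalar $d(a)$ times the derivation $D$), but forcing it to be a derivation in the \emph{first} variable requires the left and right actions of $A$ on the range of $D$ to be the scalar actions coming from the module $\mathbb{C}$, which fails for a general $X$. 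I expect this mismatch to be exactly why no direct construction can succeed, and it is the first indication that amenability cannot genuinely be reached.

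I would therefore test biamenability directly on the one-dimensional module. Any $A$-bimodule structure on $\mathbb{C}$ is of the form $a\cdot\lambda=\phi(a)\lambda$ and $\lambda\cdot a=\psi(a)\lambda$ for characters $\phi,\psi:A\to\mathbb{C}$, and $\mathbb{C}=\mathbb{C}^*$ is a dual module. Define $D_0:A\times A\to\mathbb{C}$ by $D_0(a,b)=d(a)d(b)$. The key point is that $D_0$ is a biderivation: in each variable it is a fixed scalar multiple of $d$, and since $d$ is a derivation, both $_aD_0$ and $(D_0)_b$ are derivations. Moreover $D_0\neq 0$ because $d\neq 0$.

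Next I would show that \emph{every} inner biderivation into $\mathbb{C}$ vanishes. For $\lambda\in Z(A,\mathbb{C})$ the inner biderivation is $\Delta_\lambda(a,b)=\lambda(ab-ba)=\psi(ab-ba)\lambda$, and this is $0$ since the character $\psi$ is multiplicative and hence kills commutators. Thus $BN^1(A,\mathbb{C})=\{0\}$ while $D_0\in BZ^1(A,\mathbb{C})$ is nonzero, so $BH^1(A,\mathbb{C})\neq 0$ and $A$ is \emph{not} biamenable. (When $\phi$ or $\psi$ is a nonzero character one may instead quote Proposition \ref{pr3}, since then $\sigma(A)\neq\emptyset$.) Consequently, under the standing hypothesis that a nonzero $d$ exists, the premise that $A$ is biamenable is never satisfied, and the implication holds vacuously.

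The main obstacle is conceptual rather than computational: one must recognize that the two hypotheses cannot coexist. Biamenability forces $\sigma(A)=\emptyset$, so the module $\mathbb{C}$ is trivial and a nonzero $d$ can exist only when $\overline{A^2}\neq A$; but amenability entails a bounded approximate identity and hence $\overline{A^2}=A$. Thus amenability is in fact impossible under these assumptions, which confirms both that no genuine derivation-to-biderivation argument could establish it and that the correct reading of the statement is the vacuous one obtained above.
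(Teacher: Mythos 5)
Your proof is correct, but it reaches the statement by a genuinely different route from the paper, and the comparison is instructive. The paper argues directly: for an arbitrary Banach $A$-bimodule $X$ and bounded derivation $d':A\to X^*$ it sets $D(a,b)=d(a)d'(b)$, asserts without verification that $D\in BZ^1(A,X^*)$, invokes biamenability to write $D=\Delta_f$ with $f\in Z(A,X^*)$, and then fixes $a$ with $d(a)\neq 0$ to conclude $d'=\delta_{-\frac{fa}{d(a)}}$. The mismatch you flagged in your first paragraph is exactly the unproved assertion there: writing the actions on $\mathbb{C}$ as $a\cdot\lambda=\phi(a)\lambda$ and $\lambda\cdot a=\psi(a)\lambda$ with $\phi,\psi$ multiplicative, the Leibniz rule for $D$ in the first variable demands $d(a_1)\bigl(d'(b)a_2-\psi(a_2)d'(b)\bigr)+d(a_2)\bigl(a_1d'(b)-\phi(a_1)d'(b)\bigr)=0$ for all $a_1,a_2,b$, which there is no reason to expect for a general $X$; so the paper's $D$ need not be a biderivation and its proof is incomplete as written. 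Your argument avoids this entirely by showing the hypotheses cannot coexist: $D_0=d\otimes d$ is a nonzero biderivation into $\mathbb{C}_{\phi,\psi}=(\mathbb{C}_{\psi,\phi})^*$, which is a dual module, while every inner biderivation into $\mathbb{C}$ vanishes because the multiplicative actions kill commutators; hence the existence of a nonzero $d$ already rules out biamenability, and the implication holds vacuously. (As you note, this can also be extracted from the paper's own results: biamenability gives $\sigma(A)=\emptyset$ by Proposition \ref{pr3}, forcing both actions on $\mathbb{C}$ to be zero, and then density of $A^2$ from Corollary \ref{cor3} forces $d=0$.) What each approach buys: the paper's, if the biderivation claim could be repaired, would be a contentful transfer of innerness from biderivations to derivations; yours is the argument that actually closes, it shows the theorem is true only vacuously, and it strengthens the disc-algebra example that follows in the paper, since $A(\mathbb{D})$ fails to be biamenable merely because it carries a nonzero point derivation, with no appeal to its non-amenability.
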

\begin{proof}
 Let $X$ be a Banach $A$-bimodule and $d' : A \rightarrow X^*$ be a bounded derivation. Then 
 $$\begin{array}{rcl}
 D : A\times A &\rightarrow &  X^*\\ (a,b)&\mapsto & d(a)d'(b)
 \end{array}$$
is a bounded biderivation and so there is $f\in Z(A,X^*)$ such that $$d(a)d'(b) = D(a,b) = f[a,b]\quad (a,b\in A).$$ Therefore for every $b\in A$ and for some $a \in A$ such that $d(a)\neq 0$ we have $d'(b) = \delta_{-\frac{fa}{d(a)}}(b)$. 
\end{proof}
\begin{example}
  Let $\mathbb{D} =\{z \in\mathbb{C}; |z|\leq 1\}$ be the unit disc, and $A(\mathbb{D})$ be the disc algebra. We can consider $\mathbb{C}$ as an $A(\mathbb{D})$-bimodule with module actions $\alpha f =\alpha f(0) = f\alpha$ and 
  $$\begin{array}{rcl}
 d : A(\mathbb{D}) &\rightarrow &  \mathbb{C}\\ f &\mapsto & f'(0)
 \end{array}$$ is a nonzero derivation. Therefore since $A(\mathbb{D})$ is not amenable so it is not biamenable.
 \end{example}
We know  that every amenable Banach algebra has an approximate identity  (See Proposition 2.2.1 of \cite{Ru}). A similar result is given in the following.
\begin{proposition}\label{pr1}
If  $A=span\{ab-ba: a,b\in A\}$ and  $A$ is biamenable, then $A$ has a bounded approximate identity.
\end{proposition}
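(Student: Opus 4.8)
The plan is to imitate, at the level of biderivations, the classical proof that an amenable Banach algebra has a bounded approximate identity, with the commutator map playing the role of the relevant derivation. The crucial observation is that the commutator \emph{is itself} a biderivation. Indeed, for fixed $a$ the map $b\mapsto [a,b]=ab-ba$ obeys the Leibniz rule $[a,bc]=[a,b]c+b[a,c]$, and symmetrically $[ab,c]=a[b,c]+[a,c]b$; hence the bounded bilinear map $D_0:A\times A\to A$, $D_0(a,b)=ab-ba$, is a derivation in each variable, i.e.\ a biderivation. Composing with the canonical embedding $A\hookrightarrow A^{**}$, which is an $A$-bimodule morphism, I regard $D_0$ as a biderivation $D_0:A\times A\to A^{**}$, where $A^{**}=(A^*)^*$ carries its natural dual-bimodule structure (the one coming from the first Arens product, for which the actions of $A$ on $A^{**}$ are unambiguous).

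Next I would invoke biamenability. Taking $X=A^*$ in the definition gives $BH^1(A,A^{**})=0$, so $D_0$ is an inner biderivation: there is $\Lambda\in Z(A,A^{**})$ with $D_0(a,b)=\Lambda[a,b]$ for all $a,b\in A$. Since $\Lambda$ is central, after identifying $[a,b]$ with its image in $A^{**}$ this reads
$$[a,b]=\Lambda\,\square\,[a,b]=[a,b]\,\square\,\Lambda\qquad (a,b\in A),$$
where each product has one factor in $A$, so no Arens-product ambiguity arises.

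Now I would use the hypothesis $A=\mathrm{span}\{ab-ba:a,b\in A\}$. By linearity the two displayed identities extend from commutators to all of $A$:
$$x=\Lambda\,\square\,x=x\,\square\,\Lambda\qquad (x\in A),$$
so $\Lambda$ is a two-sided (mixed) identity for the canonical copy of $A$ inside $(A^{**},\square)$. The final step is the standard passage from such a bidual identity to a bounded approximate identity: by Goldstine's theorem choose a bounded net $(e_\alpha)$ in $A$ with $\hat e_\alpha\to\Lambda$ weak$^*$; the separate weak$^*$-continuity of $\square$ in the left variable, and in the right variable when the left factor lies in $A$, turns $x=\Lambda\square x$ and $x=x\square\Lambda$ into $e_\alpha x\to x$ and $x e_\alpha\to x$ weakly. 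Thus $(e_\alpha)$ is a bounded two-sided weak approximate identity, and passing to convex combinations (Mazur's theorem) yields a bounded two-sided approximate identity for $A$.

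I expect the delicate points to be bookkeeping rather than conceptual: first, confirming that $D_0$ remains a biderivation after composing with $A\hookrightarrow A^{**}$ and that the inner biderivations into $A^{**}$ are exactly the maps $(a,b)\mapsto\Lambda[a,b]$ with $\Lambda$ central; and second, the concluding weak$^*$/convexity argument that converts the mixed identity $\Lambda$ into a genuine bounded approximate identity. Both are routine once one checks that every product occurring has at least one factor in $A$, so that the two Arens products agree there and all module actions are well defined.
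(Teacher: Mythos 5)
Your proposal follows essentially the same route as the paper's own proof: apply biamenability to the commutator biderivation $(a,b)\mapsto[a,b]$ viewed as a map into $A^{**}=(A^*)^*$, obtain a central $\Lambda\in Z(A,A^{**})$ with $\Lambda[a,b]=[a,b]=[a,b]\Lambda$, extend by linearity using $A=\mathrm{span}\{ab-ba\}$, and realize $\Lambda$ as the weak$^*$ limit of a bounded net in $A$ via Goldstine. In fact your write-up is slightly more careful than the paper's at the final step, since you explicitly upgrade the resulting weak approximate identity to a norm one by Mazur's theorem, whereas the paper simply identifies the weak and norm limits.
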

\begin{proof}
 If $A$ is biamenable, then for the biderivation 
 $$\begin{array}{rcl}
 D:A\times A&\rightarrow &A^{**}\\
 (a,b)&\mapsto &[a,b]
 \end{array}$$
 there is $E\in Z(A,A^{**})$ such that for each $a, b\in A$, $E[a,b]=[a,b]$. Now let $(e_\alpha)$ be a bounded net in $A$ which is $w^*$-convergent to $E$. Then we have
 $$\begin{array}{rcl}
\lim_\alpha e_\alpha [a,b]&=&w-\lim_\alpha e_\alpha [a,b]  \\
&=&E[a,b]\\
&=&[a,b]\\
&=&[a,b]E\\
&=&w-\lim_\alpha [a,b]e_\alpha \\
&=&\lim_\alpha [a,b]e_\alpha ,
 \end{array}$$ 
 and by assumption  $A$ has an approximate identity $(e_\alpha)$.
\end{proof}
Note that the converse of this proposition is not true in general. For example in the sequel we see that every commutative Banach algebra is not biamenable. Although it may be unital or approximately unital.

For each integer $n\geq 0$ put $$AA^{(2n)}+A^{(2n)}A=\{aa^{(2n)}+b^{(2n)}b: a, b \in A, a^{(2n)}, b^{(2n)}\in A^{(2n)}\}.$$ Then we have the following proposition.
\begin{proposition}\label{n}
If $A$ is  biamenable, then for each integer $n\geq 0$, $span (AA^{(2n)}+A^{(2n)}A)$ is dense in $A^{(2n)}$.
\end{proposition}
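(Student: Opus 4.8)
The plan is to argue by contraposition. Fix $n\ge 0$ and suppose that $N_n:=\overline{\operatorname{span}(AA^{(2n)}+A^{(2n)}A)}$ is a \emph{proper} closed subspace of $A^{(2n)}$; the goal is to manufacture a Banach $A$-bimodule whose dual carries a biderivation that is not inner, contradicting biamenability. By the Hahn--Banach theorem there is a nonzero $f\in A^{(2n+1)}=(A^{(2n)})^{*}$ with $f|_{N_n}=0$. Recalling the dual actions $(a\cdot f)(u)=f(u\cdot a)$ and $(f\cdot a)(u)=f(a\cdot u)$ for $a\in A,\ u\in A^{(2n)}$, and noting $u\cdot a\in A^{(2n)}A\subseteq N_n$ and $a\cdot u\in AA^{(2n)}\subseteq N_n$, I obtain $a\cdot f=f\cdot a=0$ for all $a\in A$; in particular $f\in Z(A,A^{(2n+1)})$, and $f$ sits in the part of $A^{(2n+1)}$ on which $A$ acts trivially.

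The case $n=0$ is immediate and fixes the mechanism. Here $N_0=\overline{A^2}$; if $A^2$ were not dense, choose $\chi\in A^{*}$ with $\chi\neq 0$ and $\chi|_{\overline{A^2}}=0$, and give $\mathbb{C}$ the zero left and right $A$-actions. Then $D(a,b)=\chi(a)\chi(b)$ is a bounded biderivation into the dual module $\mathbb{C}$ (each slot is a derivation because $\chi$ kills products), it is nonzero, yet every inner biderivation $\Delta_{x}(a,b)=x[a,b]=x(ab-ba)$ vanishes since the right action is trivial. This contradicts biamenability, so biamenability already forces $A^2$ to be dense. For $n\ge 1$ I would test biamenability against the bimodule $X=A^{(2n)}$ (so $X^{*}=A^{(2n+1)}$), or equivalently against $X=A^{(2n-1)}$ to land in the dual module $A^{(2n)}$ directly.

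The remaining step, which I expect to be the main obstacle, is to build from $f$ a nonzero non-inner biderivation $D:A\times A\to X^{*}$. The difficulty is structural: because $f$ annihilates both $AA^{(2n)}$ and $A^{(2n)}A$, it interacts trivially with the module action, so any rank-one ansatz $D(a,b)=\omega(a,b)\,f$ collapses. Indeed the Leibniz rule in the second variable gives $\omega(a,bc)\,f=D(a,b)\cdot c+b\cdot D(a,c)=0$, whence $\omega(a,\cdot)$ kills $A^2$, which is impossible once $A^2$ is dense by the previous paragraph. Thus the witnessing biderivation must be genuinely bilinear and draw its nontriviality from a nontrivial auxiliary action, in the spirit of the module constructed in Proposition~\ref{pr3}: one seeks a Banach $A$-bimodule $X$ with $Z(A,X^{*})=\{0\}$ (so that, exactly as in Proposition~\ref{pr3}, the only inner biderivation into $X^{*}$ is $0$) whose dual pairing detects the nonvanishing of $f$ on $A^{(2n)}\setminus N_n$. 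Verifying that the candidate map is a bounded biderivation reduces, via Lemma~\ref{lem1} together with the identities $f|_{AA^{(2n)}}=f|_{A^{(2n)}A}=0$, to the two one-sided Leibniz relations; confirming it is nonzero and not inner then delivers the contradiction. The packaging above is routine, but producing this module and biderivation, and checking non-innerness, is the crux of the argument.
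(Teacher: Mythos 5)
Your argument is complete only for $n=0$: there your construction (a functional $\chi$ killing $\overline{A^2}$, the dual module $\mathbb{C}$ with zero actions, and the symmetric biderivation $D(a,b)=\chi(a)\chi(b)$, which cannot be inner because every inner biderivation into a module with trivial actions vanishes) is correct, and it is the same mechanism as the paper's. For $n\geq 1$ you do not prove the statement: you reduce it to ``producing this module and biderivation, and checking non-innerness,'' and you leave that step open. That is a genuine gap --- as your proposal stands, Proposition \ref{n} is established only in the case $n=0$.

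However, the gap you ran into is not a failure to see the paper's idea; it is the paper's idea that fails. The paper's proof is exactly the rank-one ansatz you refute: it takes $f\in A^{(2n+1)}$ vanishing on $\operatorname{span}(AA^{(2n)}+A^{(2n)}A)$ and sets $D(a,b)=f(a)f(b)f$, where $f(a)$ can only mean $f(\hat a)$ for the canonical image $\hat a\in A^{(2n)}$ of $a\in A$. This $D$ does satisfy both Leibniz identities (every term vanishes), but the paper never verifies $D\neq 0$, and in fact $D$ is identically zero in the only regime that matters: since the canonical embedding is a module map, $\widehat{ab}=a\cdot\hat b\in AA^{(2n)}$, so $f$ annihilates $\widehat{A^2}$; and since biamenability of $A$ already forces $A^2$ to be dense in $A$ (the case $n=0$, which is also what Corollary \ref{cor3} rests on), continuity gives $f|_{\hat A}=0$, hence $D\equiv 0$, which is inner, and no contradiction with biamenability is obtained. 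This is precisely your collapse computation $\omega(a,bc)=0$. So for $n\geq 1$ neither your proposal nor the paper's printed proof establishes the proposition; the difference is that you correctly identified the obstruction, while the paper overlooks it.
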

\begin{proof}
If $span (AA^{(2n)}+A^{(2n)}A)$ is not dense in $A^{(2n)}$, then there exists a non-zero linear functional $f\in A^{(2n+1)}$ such that it is zero on $span (AA^{2n}+A^{(2n)}A)$. Now the bilinear mapping
$$\begin{array}{rcl}
D:A\times A&\rightarrow &A^{(2n+1)}\\
(a,b)&\mapsto &f(a)f(b)f
\end{array}$$
is a biderivation which is not inner. So $A$ is not biamenable, which is a contradiction. 
\end{proof}
Let $A$ be a Banach algebra and $$A^{n}=span\{a_1...a_n: a_i\in A\}\hspace{1cm}(n\in \mathbb{N}).$$
 As a corollary of the latter proposition we have:
\begin{corollary}\label{cor3}
If $A$ is  biamenable then for each $n\in \mathbb{N},$ $A^{n}$ is dense in $A$. 
\end{corollary}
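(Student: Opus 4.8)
The plan is to deduce this from Proposition \ref{n}, which already handles the quadratic case, and then to propagate density to all higher powers by an elementary induction. First I would record two preliminary observations. Taking $n=0$ in Proposition \ref{n} (so that $A^{(0)}=A$) gives that $\mathrm{span}(AA+AA)=A^2$ is dense in $A$; this is the base case. I would also note that the powers form a decreasing chain $A\supseteq A^2\supseteq A^3\supseteq\cdots$, since any product $a_1\cdots a_{k+1}=(a_1\cdots a_{k-1})(a_ka_{k+1})$ is a product of $k$ elements and hence lies in $A^k$, so $A^{k+1}\subseteq A^k$. Consequently the real content of the corollary is to push density \emph{downward} along this chain, i.e. from the larger set $A^2$ to the smaller sets $A^n$.

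The induction runs on $n$, with the trivial case $A^1=A$ and the base case $\overline{A^2}=A$ already in hand. For the inductive step I assume $\overline{A^n}=A$ and prove $\overline{A^{n+1}}=A$. The key identity is that $A^{n+1}$ equals the linear span of all products $uy$ with $u\in A^n$ and $y\in A$, because $a_1\cdots a_{n+1}=(a_1\cdots a_n)a_{n+1}$. Given $a\in A$ and $\varepsilon>0$, I would first use the base case to choose $b=\sum_{i=1}^m x_iy_i\in A^2$ with $\|a-b\|<\varepsilon/2$, then use the inductive hypothesis to choose $u_i\in A^n$ with $\|x_i-u_i\|$ so small that $\sum_{i=1}^m\|x_i-u_i\|\,\|y_i\|<\varepsilon/2$. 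Since multiplication is bounded, $\sum_{i=1}^m u_iy_i\in A^{n+1}$ satisfies $\|b-\sum_i u_iy_i\|<\varepsilon/2$, whence $\|a-\sum_i u_iy_i\|<\varepsilon$. As $a$ and $\varepsilon$ were arbitrary, $A^{n+1}$ is dense in $A$, completing the induction.

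There is no serious obstacle here once Proposition \ref{n} supplies the density of $A^2$; the argument is purely a continuity-and-approximation bootstrap. The only point requiring a little care is the direction of the inclusions: because $A^{n+1}\subseteq A^n$, density of $A^2$ does not by itself give density of the smaller sets, so one genuinely needs the factorization $A^{n+1}=\mathrm{span}\{uy:u\in A^n,\,y\in A\}$ together with the inductive hypothesis to replace the left factors $x_i$ by approximants drawn from $A^n$. Boundedness of the multiplication, applied to the finite sum $\sum_i x_iy_i$, is what keeps the accumulated error under control.
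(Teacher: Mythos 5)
Your proof is correct and follows essentially the same route as the paper: Proposition \ref{n} with $n=0$ gives density of $A^2$, and then induction bootstraps this to all $A^n$. The paper's own proof merely sketches this induction, so your write-up simply supplies the approximation details (the factorization $A^{n+1}=\mathrm{span}\{uy: u\in A^n,\ y\in A\}$ and the continuity estimate) that the paper leaves implicit.
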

\begin{proof}
By Proposition \ref{n} $A^{2}$ is dense in $A$. Now by applying the density of $A^{2}$ in $A$ we can prove that $A^{3}$ is dense in $A$ and also by an inductive method we can prove that for each $n$, $A^{n}$ is dense in $A$.
\end{proof}  
For a Banach algebra $A$, put $$[A,A]=\{[a,b]: a,b\in A\} \  \ \ \mbox{and} \ \ \  [A,A]A=\{[a,b]c: a, b, c\in A\}.$$ The following proposition gives a big class of non-biamenable Banach algebras.
\begin{proposition}
Let $A$ be a biamenable Banach algebra. Then $span([A,A]\cup [A,A]A)$ is dense in $A$.
\end{proposition}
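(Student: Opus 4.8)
The plan is to argue by contradiction, in the style of Propositions \ref{pr3} and \ref{n}. Suppose $\operatorname{span}([A,A]\cup[A,A]A)$ is not dense in $A$. By the Hahn--Banach theorem there is a nonzero $f\in A^{*}$ vanishing on it, so that $f([a,b])=0$ and $f([a,b]c)=0$ for all $a,b,c\in A$. The first relation says that $f$ is a trace, $f(ab)=f(ba)$, and combined with the second it makes $f$ invariant under every permutation of the factors of a product; in particular every antisymmetric combination built from $f$ collapses, which already warns that the witnessing biderivation cannot simply be of rank-one type in $A^{*}$.

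The first real step is to choose the right module by passing to a quotient. Put $N:=\overline{\operatorname{span}}([A,A]\cup[A,A]A)$. The identity $a[b,c]=[a,bc]-[a,cb]+[b,c]a$ shows $A[A,A]\subseteq N$, and together with $[A,A]A\subseteq N$ this gives that $N$ is a closed two-sided ideal, so $B:=A/N$ is a \emph{commutative} Banach algebra and $N^{\perp}=(A/N)^{*}=B^{*}$ is a closed sub-bimodule of $A^{*}$. Since $xc-cx\in N$, the bimodule $N^{\perp}$ is symmetric, and since $x[a,b]\in A[A,A]\subseteq N$ and $[a,b]x\in[A,A]A\subseteq N$, every commutator acts as $0$ on $N^{\perp}$. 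Hence every inner biderivation $\Delta_{\lambda}(a,b)=\lambda[a,b]$ with values in $N^{\perp}$ vanishes, and it suffices to produce a \emph{single nonzero} biderivation $A\times A\to N^{\perp}$ to reach a contradiction with biamenability.

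Next I would show that such biderivations descend to $B$. If $D\in BZ^{1}(A,N^{\perp})$, then for fixed $b$ the derivation $D_{b}$ annihilates each commutator (because $N^{\perp}$ is symmetric the two mixed terms cancel), and then $D_{b}([c,d]e)=D_{b}([c,d])\cdot e+[c,d]\cdot D_{b}(e)=0$ because commutators act trivially; the same argument in the other variable shows $D|_{N\times A}=D|_{A\times N}=0$, so $D$ factors as a biderivation $B\times B\to B^{*}$. As $B$ is commutative, every nonzero such biderivation is automatically non-inner. Thus the whole statement is equivalent to: the commutative quotient $B=A/N$ must be $0$; that is, a nonzero commutative biamenable Banach algebra cannot exist. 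Note also that biamenability of $A$ passes to $B$ here, and by Corollary \ref{cor3} it forces $\overline{B^{2}}=B$.

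To close the argument I would split on the spectrum of $B$. If $\sigma(B)\ne\varnothing$, pick a character $\chi$ and use the module and biderivation of Proposition \ref{pr3}: the bimodule $X$ with $b\cdot x=0$, $x\cdot b=\chi(b)x$ has $Z(B,X)=\{0\}$, and $D(a,b)=\chi(a)(b\cdot h-h\cdot b)$ with $0\ne h\in X^{*}$ is a nonzero biderivation into $X^{*}$, contradicting biamenability of $B$. The main obstacle is precisely the remaining case in which $B\ne 0$ is radical, so $\sigma(B)=\varnothing$, while $\overline{B^{2}}=B$; here no character is available and, because products are dense, every elementary rank-one or product-type candidate in $B^{*}$ either fails the Leibniz rule or vanishes. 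For this case the plan is to manufacture a nonzero bounded derivation $d\colon B\to B^{**}$ and take $D(a,b)=d(a)\,\square\,d(b)$ as in the construction described just before Proposition \ref{pr3}; the Leibniz rule then follows from the derivation identity for $d$ together with Lemma \ref{lem1} and the properties of the Arens product $\square$. The crux of the entire proof is therefore guaranteeing the existence of such a $d$ with $d(a)\,\square\,d(b)\ne 0$ for some $a,b$ — equivalently, that a nonzero radical commutative Banach algebra with dense squares always carries a nontrivial derivation into its bidual detecting the product — and this is the step I expect to require the most work.
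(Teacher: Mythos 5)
Your reduction to the commutative quotient is sound as far as it goes: $N=\overline{\operatorname{span}}([A,A]\cup[A,A]A)$ is indeed a closed two-sided ideal (the identity $a[b,c]=[a,bc]-[a,cb]+[b,c]a$ gives $A[A,A]\subseteq N$), every inner biderivation into $N^{\perp}\cong(A/N)^{*}$ vanishes, biamenability passes to $B=A/N$ by Corollary \ref{co} (whose proof is independent of this proposition, so no circularity), and the case $\sigma(B)\neq\emptyset$ is correctly dispatched by the construction of Proposition \ref{pr3}. But there is a genuine gap exactly where you flag it: the radical case. After your reduction, the entire content of the proposition is the claim that a nonzero commutative radical Banach algebra $B$ with $\overline{B^{2}}=B$ cannot be biamenable, and your proposed method --- manufacture a bounded derivation $d:B\to B^{**}$ with $d(a)\,\square\,d(b)\neq 0$ for some $a,b$ --- is not carried out and cannot be taken for granted. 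Nothing in your argument (or in any general theorem) guarantees that such a $d$ exists on a radical commutative Banach algebra; the construction sketched before Proposition \ref{pr3} only says that \emph{if} such a $d$ exists then $A$ is not biamenable. So the proof is incomplete precisely on its main case.

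The paper avoids the dichotomy altogether by a direct construction from the Hahn--Banach functional $f$ you already have, and it is worth seeing why it needs no characters and no derivations into biduals. Since $f$ kills $[A,A]$ and $[A,A]A$, both $f$ and every translate $b.f$ are tracial. Take $X=\overline{f.A}\subseteq A^{*}$ with right action $(f.a).b=f.(ab)$ and left action zero, and define $D:A\times A\to X^{*}$ by $\langle D(a,b),f.c\rangle=f(abc)$. The trace identities make $D$ a biderivation; moreover $Z(A,X^{*})=\{0\}$ (so all inner biderivations into $X^{*}$ vanish) and $D\neq 0$, both as consequences of Corollary \ref{cor3}, i.e.\ of the density of $A^{2}$ and $A^{3}$ in $A$, which biamenability supplies for free. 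This works uniformly, radical or not. If you wish to keep your quotient framework, the same trick applied to $B$ (any nonzero $f\in B^{*}$, module $\overline{f.B}$, $\langle D(a,b),f.c\rangle=f(abc)$, nonzero by density of $B^{3}$) closes your radical case --- but once you have that trick, the detour through $N$ and $B$ is unnecessary.
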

\begin{proof}
Suppose $S=span([A,A]\cup [A,A]A)$ is not dense in $A$. Then there exists a nonzero element $f\in A^*$ such that $f|_S=0$. In particular for each $a, b, c\in A$, we have $f(ab)=f(ba)$ and $c.f(ab)=c.f(ba)$. Consider $X=\overline{f.A}$ as an $A$-bimodule with module actions
$$(f.a).b=f.ab, \ \ \  \mbox{and} \ \ \ b.(f.a)=0\hspace{1cm}(a, b\in A).$$ Then $Z(A,X^*)=\{0\}$ and so the only inner biderivation from $A\times A$ to $X^*$ is zero. Now by Corollary \ref{cor3} the bilinear mapping $D:A\times A\rightarrow X^*$ defined by $$\langle D(a,b),f.c\rangle=f(abc), \hspace{1cm}(a, b, c\in A)$$
is nonzero. Also for each $a, b, c, d\in A$ we have
$$\begin{array}{rcl}
\langle D(ab,c),f.d\rangle &=&f(abcd)\\
&=&f(bcda)\\
&=&\langle D(b,c),f.da\rangle \\
&=&\langle D(b,c),(f.d).a)\rangle \\
&=&\langle aD(b,c),f.d\rangle \\
&=&\langle aD(b,c)+D(a,c)b,f.d\rangle ,
\end{array}$$ and similarly
$$\begin{array}{rcl}
\langle D(a,bc),f.d\rangle &=&f(abcd)\\
&=&f(cdab)\\
&=&(b.f)(cda)\\
&=&(b.f)(acd)\\
&=&f(acdb)\\
&=&\langle D(a,c),f.db\rangle \\
&=&\langle bD(a,c)+D(a,b)c, f.d\rangle .
\end{array}$$
 So $D$ is a nonzero biderivation and so it is not inner. That is a contradiction.
\end{proof}
Note that if a biamenable Banach algebra $A$ has a right approximate identity, then $[A,A]\subseteq \overline{[A,A]A}$ and therefore $span([A,A]A)$ is dense in $A$. This may be compared with the converse of Proposition \ref{pr1}, for each biamenable Banach algebra.
\begin{corollary}\label{com}
Every non zero commutative Banach algebra is not biamenable.
\end{corollary}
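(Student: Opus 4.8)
The plan is to reach a contradiction by invoking the immediately preceding proposition. The first step is purely algebraic: if $A$ is commutative then every commutator is trivial, so that $[a,b]=ab-ba=0$ for all $a,b\in A$. Hence $[A,A]=\{0\}$ and $[A,A]A=\{0\}$, and therefore the subspace $span([A,A]\cup [A,A]A)$ collapses to $\{0\}$.

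Next, assuming toward a contradiction that a commutative Banach algebra $A$ is biamenable, the preceding proposition forces $span([A,A]\cup [A,A]A)$ to be dense in $A$. Combining this with the first step, $\{0\}$ would be dense in $A$, i.e. $A=\overline{\{0\}}=\{0\}$. Since by hypothesis $A\neq\{0\}$, this is impossible, and so no nonzero commutative Banach algebra can be biamenable.

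I expect no genuine obstacle here, since all the analytic content is already packaged in the preceding proposition; the corollary is essentially the observation that commutativity trivializes the commutator subspace. As an independent sanity check (and an alternative route one could follow instead of appealing to that proposition), note that for commutative $A$ the defining formula $\Delta_x(a,b)=x[a,b]$ vanishes identically, so that $BN^1(A,X^*)=\{0\}$ for every Banach $A$-bimodule $X$. Biamenability would then demand that the only biderivation into any dual module be zero, and exhibiting a single nonzero biderivation—as in the constructions used in the preceding results—already refutes this.
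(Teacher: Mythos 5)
Your proof is correct and follows exactly the route the paper intends: the corollary is stated as an immediate consequence of the preceding proposition, since commutativity forces $[A,A]=[A,A]A=\{0\}$, so $span([A,A]\cup[A,A]A)=\{0\}$ cannot be dense in a nonzero algebra. Your closing "sanity check" is a reasonable side observation but is not needed (and would require actually exhibiting a nonzero biderivation to be a complete alternative argument); the main argument stands on its own.
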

This corollary implies that for every locally compact abelian group $G$, $L^1(G)$ is not biamenable but it is amenable by applying Johnson's theorem and the Example 1.1.5 of \cite{Ru}.
\begin{theorem} 
Let $A$ be a Banach algebra, $X$ be a Banach $A$-bimodule and $I$ be a closed ideal of it such that $Z(A,X^*)=Z(I,X^*)$. Then if $BH^1(I,X^*)=\{0\}$ and $\frac{A}{I}$ is biamenable, then $BH^1(A,X^*)=\{0\}$.
\end{theorem}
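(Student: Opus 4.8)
The plan is to mimic, for biderivations, the classical ideal--quotient reduction that underlies the amenability analogue in Proposition 2.1.5 of \cite{Ru}, using Lemma \ref{lem1} to keep track of how a biderivation interacts with the ideal. So let $D\in BZ^1(A,X^*)$ be arbitrary; I must produce $\chi\in Z(A,X^*)$ with $D=\Delta_\chi$. First I would restrict $D$ to $I\times I$. Since $X^*$ is a dual Banach $I$-bimodule and $BH^1(I,X^*)=\{0\}$ by hypothesis, $D|_{I\times I}$ is an inner biderivation of $I$, so there is $\phi\in Z(I,X^*)$ with $D(i,j)=\phi[i,j]$ for all $i,j\in I$. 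Here the hypothesis $Z(A,X^*)=Z(I,X^*)$ does exactly the work needed to promote $\phi$ to an admissible coefficient for $A$: it gives $\phi\in Z(A,X^*)$, hence $\Delta_\phi\in BN^1(A,X^*)$. Setting $D_0:=D-\Delta_\phi$, I am reduced to the case of a biderivation $D_0$ that vanishes identically on $I\times I$, and it suffices to show that such a $D_0$ is inner.

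The second half of the argument is a descent to $A/I$. The natural receptacle is the closed subspace $X^*_I:=(IX+XI)^{\perp}$, which is canonically the dual of the quotient bimodule $X/\overline{IX+XI}$; since $I$ annihilates $X/\overline{IX+XI}$ on both sides, $X^*_I$ is a dual Banach $A/I$-bimodule, and its $A/I$-action is just the restriction of the $A$-action on $X^*$. The goal is to show that $D_0$ is $X^*_I$-valued and factors through the quotient map $A\to A/I$ in each variable, thereby inducing a bounded biderivation $\widetilde D\colon A/I\times A/I\to X^*_I$. Granting this, biamenability of $A/I$ furnishes $\psi\in Z(A/I,X^*_I)$ with $\widetilde D=\Delta_\psi$; reading $\psi$ back as an element of $X^*$, the identity $\overline a\cdot\psi=a\cdot\psi$ and its right-hand analogue show $\psi\in Z(A,X^*)$ and $D_0=\Delta_\psi$. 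Hence $D=\Delta_{\phi+\psi}$ is inner and $BH^1(A,X^*)=\{0\}$.

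The main obstacle is precisely this descent, which is genuinely more delicate for biderivations than for derivations. For a single derivation $d$ the condition $d|_I=0$ forces both $d(a)i=0=i\,d(a)$ and the factorisation at one stroke, via $d(ai)=d(a)i$ with $ai\in I$. For a biderivation, however, $D_0|_{I\times I}=0$ together with the two Leibniz rules only yields that the mixed values $D_0(I,A)$ and $D_0(A,I)$ lie in $X^*_I$; they do not, by themselves, make these values vanish, so neither the $X^*_I$-valuedness of $D_0$ on all of $A\times A$ nor the factorisation is automatic. To close this gap I expect to invoke Lemma \ref{lem1}, which gives $[I,I]\,D_0(a,b)=0=D_0(a,b)\,[I,I]$ for all $a,b\in A$, and to feed in the density consequences of biamenability of $A/I$ (Corollary \ref{cor3} and the commutator-density proposition, applied to $A/I$) so that annihilation against commutators propagates to annihilation against all of $A$ modulo $I$. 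Making this collapse of the off-diagonal values precise, and thereby securing that $D_0$ descends to an honest $A/I$-biderivation into $X^*_I$, is the crux of the proof.
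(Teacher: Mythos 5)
Your opening move coincides exactly with the paper's: restrict $D$ to $I\times I$, use $BH^1(I,X^*)=\{0\}$ together with $Z(I,X^*)=Z(A,X^*)$ to subtract an inner biderivation, and reduce to a biderivation $D_0$ vanishing on $I\times I$; and your intended second half (descend to a biderivation $A/I\times A/I\to (IX+XI)^{\perp}$ and apply biamenability of $A/I$) is also the paper's. The problem is that your proposal stops precisely where the work lies. You state yourself that establishing the $(IX+XI)^{\perp}$-valuedness of $D_0$ and the factorisation through $A/I$ ``is the crux of the proof,'' and for that crux you offer only an expectation of an argument, not an argument. That is a genuine gap: without it you have no well-defined map on $A/I\times A/I$ to feed into the biamenability hypothesis, and indeed you correctly observe (a point the paper itself glosses over) that $D_0|_{I\times I}=0$ alone does not make the mixed values $D_0(A,I)$, $D_0(I,A)$ vanish, which is what well-definedness requires.

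Moreover, the mechanism you sketch for closing the gap would not work. Lemma \ref{lem1} with $c,d\in I$ yields only $D_0(a,b)[i,j]=0$ for $i,j\in I$, i.e. annihilation against \emph{commutators} of $I$, and the density results you propose to import (Corollary \ref{cor3} and the commutator-density proposition, applied to $A/I$) are statements about subsets of $A/I$ being dense in $A/I$; they say nothing about subsets of $I$ being dense in $I$, so they cannot upgrade annihilation of $[I,I]$ to what is actually needed, namely $D_0(a,b)\,i=0=i\,D_0(a,b)$ for all $i\in I$. The paper's mechanism is different: expanding with both Leibniz rules gives
$$D_0(a,b)ij=D_0(ai,bj)-bD_0(ai,j)-aD_0(i,bj)+abD_0(i,j)=0\quad (i,j\in I,\ a,b\in A),$$
because all four terms on the right have \emph{both} arguments in the ideal $I$; combined with density of $I^2$ in $I$ this places $D_0(a,b)$ in $(IX+XI)^{\perp}$, and the same density kills the mixed values (which, as you note, already lie in $(IX+XI)^{\perp}$ and hence are annihilated by $I$, so they vanish on $I^2$ in the relevant variable). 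In short, the descent is closed by products $ij$, not commutators $[i,j]$. Be aware also that the paper justifies the density of $I^2$ in $I$ by citing Proposition \ref{n}, which strictly speaking requires $I$ to be biamenable --- a hypothesis not present in the theorem --- so even the paper's own proof is not airtight at this point; but your proposal does not repair that either, it simply leaves the entire descent open.
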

\begin{proof}
Let $D:A\times A\rightarrow X^*$ be a biderivation. Then $D_0=D|_{I\times I}\in BZ^1(I,X^*)$ and so $D_0=\Delta_E$, for some $E\in Z(A,X^*)$. Put $\tilde{D}=D-\Delta_E$. Then $\tilde{D}(I\times I)=0$ and so $\mathfrak{D}:\frac{A}{I}\times\frac{A}{I}\rightarrow X^*$, defined by $\mathfrak{D}((a+I,b+I))=\tilde{D}((a,b))$ is a well defined map. \\
Put $X_0=IX+XI$. Then $$(\frac{X}{X_0})^*=X_0^\perp=\{\phi\in X^*; \phi i=0=i\phi, \text{ for all } \ i\in I\}.$$
and so we can consider $X_0^\perp$ as an $\frac{A}{I}$-bimodule with the module actions $(a+I).\phi=a.\phi$ and $\phi.(a+I)=\phi.a$, for each $a\in A$ and $\phi\in X_0^\perp$. On the other hand we have
\begin{align*}
\tilde{D}(a,b)ij&=(\tilde{D}(ai,b)-a\tilde{D}(i,b))j\\
&=\tilde{D}(ai,bj)-b\tilde{D}(ai,j)-a\tilde{D}(i,bj)+ab\tilde{D}(i,j)\\
&=0, \quad (i, j\in I, a, b\in A).
\end{align*}
Similarly we can show that $ij\tilde{D}(a,b)=0$. Therefore by density of $I^2$ in $I$ (Proposition \ref{n}) we conclude that $\mathfrak{D}(\frac{A}{I}\times\frac{A}{I})\subseteq X_0^\perp$ and then we can coclude that $\mathfrak{D}:\frac{A}{I}\times\frac{A}{I}\rightarrow X_0^\perp$ is a biderivation. So there is $\psi\in Z(\frac{A}{I},X_0^\perp)\subseteq Z(A,X^*)$ such that $D-\Delta_E=\tilde{D}=\Delta_\psi$. Hence $D=\Delta_{E+\psi}$ and $E+\psi\in Z(A,X^*)$.
\end{proof}
We know that a Banach algebra $A$ is amenable if and only if the unconditional unitization $A^\flat$ of $A$ (see {\cite[Definition 1.3.3]{D}}) is amenable   {\cite[Corollary 2.3.11]{Ru}}. But it is not true for biamenability of Banach algebras. Indeed we show that the unconditional unitization of each Banach algebra is not biamenable.
\begin{lemma}
If $\theta: A\rightarrow B$ is a continuous homomorphism of Banach algebras with dense range and $A$ is biamenable, then so is $B$.
\end{lemma}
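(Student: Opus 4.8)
The plan is to transport a biderivation on $B$ back to $A$ through $\theta$, invoke biamenability of $A$, and then push both the resulting central element and the innerness relation forward to $B$ using the density of $\theta(A)$. First I would fix an arbitrary Banach $B$-bimodule $X$ and make $X$ into a Banach $A$-bimodule by setting $a\cdot x=\theta(a)x$ and $x\cdot a=x\theta(a)$ for $a\in A$ and $x\in X$. Boundedness of $\theta$ gives continuity of these actions, and the fact that $\theta$ is a homomorphism gives associativity, so $X^*$ becomes a dual Banach $A$-bimodule whose $A$-actions are exactly the $\theta$-pullbacks of the dual $B$-actions (indeed $(a\cdot f)(x)=f(x\theta(a))=(\theta(a)\cdot f)(x)$). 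Given $D\in BZ^1(B,X^*)$, I would define $\tilde D\colon A\times A\to X^*$ by $\tilde D(a,b)=D(\theta(a),\theta(b))$; since $\theta$ is multiplicative a short computation shows $\tilde D(aa',b)=a\cdot\tilde D(a',b)+\tilde D(a,b)\cdot a'$ and the analogous identity in the second variable, so $\tilde D\in BZ^1(A,X^*)$ for the pulled-back structure.

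Next, biamenability of $A$ yields $\phi\in Z(A,X^*)$ with $\tilde D=\Delta_\phi$, that is $D(\theta(a),\theta(b))=\phi[\theta(a),\theta(b)]$ for all $a,b\in A$. Two uses of density of the range then finish the argument. For the first, I must upgrade $\phi$ from $Z(A,X^*)$ to $Z(B,X^*)$: by construction $\phi$ satisfies $\theta(a)\phi=\phi\theta(a)$ for every $a\in A$, and for fixed $\phi$ the map $b\mapsto b\phi-\phi b$ is norm continuous (the dual actions satisfy $\|b\phi\|\le\|\phi\|\,\|b\|$), so its kernel is a closed subspace of $B$ containing $\theta(A)$, hence all of $B=\overline{\theta(A)}$; thus $\phi\in Z(B,X^*)$ and $\Delta_\phi$ is a genuine inner biderivation on $B$. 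For the second, the identity above reads $D(\theta(a),\theta(b))=\Delta_\phi(\theta(a),\theta(b))$, so the two bounded bilinear maps $D$ and $\Delta_\phi$ agree on the dense set $\theta(A)\times\theta(A)$ and therefore, by joint continuity, coincide on all of $B\times B$. Hence $D=\Delta_\phi$ is inner, $BH^1(B,X^*)=0$ for every Banach $B$-bimodule $X$, and $B$ is biamenable.

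The main obstacle, and the only step requiring real care, is the transfer of the central element: one must verify that $\phi$ remains in the commutant of all of $B$ rather than merely of $\theta(A)$, which rests precisely on the norm continuity of the module actions together with density of the range. Once that is secured, the extension of the innerness relation from $\theta(A)\times\theta(A)$ to $B\times B$ is a routine density argument using continuity of the bilinear maps, and no further structure on $\theta$ (such as injectivity or norm bounds beyond continuity) is needed.
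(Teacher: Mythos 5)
Your proposal is correct and follows essentially the same route as the paper: pull back the $B$-bimodule structure and the biderivation along $\theta$, apply biamenability of $A$ to get $\phi\in Z(A,X^*)$, and then use density of $\theta(A)$ twice --- once to promote $\phi$ to $Z(B,X^*)$ and once to extend the identity $D=\Delta_\phi$ from $\theta(A)\times\theta(A)$ to $B\times B$. The paper compresses both density steps into one sentence; your write-up simply makes explicit the continuity arguments (boundedness of the dual module actions and joint continuity of the bounded bilinear maps) that justify them.
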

\begin{proof}
Let $X$ be a Banach $B$-bimodule. Consider $X$ as an $A$-bimodule with module actions $ax=\theta(a) x$ and $xa=x\theta(a)$. Now for each $D\in BZ^1(B,X^*)$, $D\circ (\theta\times\theta)\in BZ^1(A,X^*)$ and biamenability of $A$ implies that  $D\circ (\theta\times\theta)=\Delta_\phi$ for some $\phi\in Z(A,X^*)$. Now by density we conclude that $\phi\in Z(B,X^*)$ and  $D=\Delta_\phi$ .
\end{proof}
\begin{corollary}\label{co}
If $A$ is biamenable then for each closed ideal $I$ of $A$, $\frac{A}{I}$ is biamenable.
\end{corollary}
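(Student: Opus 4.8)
The plan is to deduce this immediately from the preceding Lemma. The natural candidate for the homomorphism is the canonical quotient map $\pi:A\rightarrow A/I$ given by $\pi(a)=a+I$. Since $I$ is a closed two-sided ideal, $A/I$ is a Banach algebra and $\pi$ is a bounded (indeed norm-decreasing) algebra homomorphism. Moreover $\pi$ is surjective, so in particular it has dense range.

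First I would verify the hypotheses of the Lemma for $\theta=\pi$ and $B=A/I$: namely, $\pi$ is a continuous homomorphism with dense range, and $A$ is biamenable by assumption. Then I would simply invoke the Lemma to conclude that $A/I$ is biamenable, which is exactly the claim.

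I do not expect any genuine obstacle here, since the entire content has been packaged into the Lemma and the corollary is essentially the special case of a surjective (hence dense-range) homomorphism. The only points to check are the standard facts that $A/I$ is a Banach algebra and that $\pi$ is a contractive homomorphism, both of which are routine once $I$ is assumed to be a closed ideal; no estimate specific to biderivations or inner biderivations needs to be redone.
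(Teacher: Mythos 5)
Your proof is correct and is precisely the argument the paper intends: the corollary follows by applying the preceding lemma to the quotient map $\pi:A\rightarrow A/I$, which is a continuous, surjective (hence dense-range) homomorphism of Banach algebras. No further verification beyond the routine facts you mention is needed.
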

For analogues of the above two results in the area
 of amenability see Proposition 2.3.1 and Corollary 2.3.2 of \cite{Ru}.

Now we have the following theorem for the unconditional unitization $A^\flat$ of a Banach algebra $A$.
\begin{theorem}
For each Banach algebra $A$, the unconditional unitization $A^\flat$ is not biamenable.
\end{theorem}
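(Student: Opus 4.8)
The plan is to reduce the statement to Proposition \ref{pr3}, which says that any Banach algebra with nonempty spectrum is not biamenable. Hence it suffices to produce a single character on $A^\flat$, and the natural candidate is the augmentation functional built into the unitization itself. So my whole task reduces to checking that $\sigma(A^\flat)\neq\emptyset$.

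First I would recall the construction of the unconditional unitization: as a Banach space $A^\flat=A\oplus\mathbb{F}$, where $\mathbb{F}$ is the ground field, equipped with identity $(0,1)$ and product $(a,\lambda)(b,\mu)=(ab+\lambda b+\mu a,\lambda\mu)$. The essential feature is that this adjoins a fresh identity \emph{unconditionally}, i.e.\ whether or not $A$ was already unital, so the scalar coordinate is always genuinely present. Next I would verify that the projection $\varphi:A^\flat\to\mathbb{F}$, $\varphi(a,\lambda)=\lambda$, is a character: it is linear and continuous, it is nonzero since $\varphi(0,1)=1$, and the product formula gives $\varphi\big((a,\lambda)(b,\mu)\big)=\lambda\mu=\varphi(a,\lambda)\varphi(b,\mu)$, so $\varphi$ is multiplicative. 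Therefore $\varphi\in\sigma(A^\flat)$ and in particular $\sigma(A^\flat)\neq\emptyset$.

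Finally, I would invoke Proposition \ref{pr3} with this nonempty spectrum to conclude that $A^\flat$ is not biamenable. I do not expect a real obstacle: once the augmentation character is identified the result is immediate, and the only point that needs any attention — that $A^\flat$ carries such a character regardless of the internal structure of $A$ — is precisely what the word ``unconditional'' guarantees, since the adjoined scalar line always supplies it. This is also what makes the behaviour opposite to amenability: by \cite[Corollary 2.3.11]{Ru} unitization is harmless for amenability, whereas here it is fatal, because forming $A^\flat$ always manufactures a character and, by Proposition \ref{pr3}, any character is an obstruction to biamenability.
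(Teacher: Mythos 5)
Your proof is correct, but it takes a different route from the paper's. The paper argues by quotients: since $A$ is a closed ideal in $A^\flat$, Corollary \ref{co} (biamenability passes to quotients by closed ideals, itself a consequence of the lemma on homomorphisms with dense range) would force $\mathbb{C}\cong A^\flat/A$ to be biamenable, contradicting the earlier observation that unital commutative Banach algebras are not biamenable. You instead apply Proposition \ref{pr3} directly to $A^\flat$, exhibiting the augmentation character $\varphi(a,\lambda)=\lambda$; your verification that $\varphi$ is a nonzero continuous multiplicative functional is correct, and the word ``unconditional'' does exactly the work you say it does. The two arguments are close cousins — your character $\varphi$ is precisely the quotient map $A^\flat\to A^\flat/A\cong\mathbb{C}$, and the paper's route ultimately also rests on Proposition \ref{pr3} (applied to $\mathbb{C}$ rather than to $A^\flat$) — but yours is more self-contained: it bypasses the hereditary machinery (the dense-range lemma and Corollary \ref{co}) entirely and needs only one earlier result. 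What the paper's version buys in exchange is a demonstration of how the quotient-stability of biamenability gets used, which is the structural point that section is developing; your version buys economy and makes transparent exactly where the obstruction lives, namely in the adjoined scalar line.
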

\begin{proof}
If $A^\flat$ is biamenable then $\mathbb{C}=\frac{A^\flat}{A}$ is biamenable by
Corollary \ref{co} (recall that A is a closed ideal in $A^\flat$). A contradiction.
\end{proof}
\section{biamenability of a pair of Banach algebras}
Let $A$ and $B$ be Banach algebras and $X$ be an $A-B$-bimodule that is $X$ is an $A$-bimodule and $B$-bimodule and we have $$a(xb)=(ax)b \ \ , \ \ b(xa)=(bx)a\hspace{2cm}(a\in A, b\in B ,  x\in X).$$ The bounded bilinear mapping $D:A\times B \rightarrow X$ is called a biderivation if $D$ is a derivation  with respect to both arguments. That is the mappings $_aD:B\rightarrow X$ and $D_b:A\rightarrow X$ where $$_aD(b)=D(a,b)=D_b(a) \hspace{2cm}(a\in A , b\in B)$$ are derivations. We denote the space of such biderivations by $BZ^1(A\times B, X)$.

Let $x\in Z(A,X)\cap Z(B,X)$, where $$Z(A,X) =\{x\in X ; ax=xa \ \  \forall a\in A\}.$$
The map $D_x:A \times B\rightarrow X$ that
  $$D_x(a,b)=x[a,b]=(xa)b-(xb)a \ \ \ \ \ \ (a\in A, b\in B )$$
 is a basic example of biderivations and called an inner biderivation. We denote the space of such inner biderivations by $BN^1(A\times B,X)$. Also we define the first bicohomology group $BH^1(A\times B,X)$ as follows, $$BH^1(A\times B,X)=\frac{BZ^1(A\times B,X)}{BN^1(A\times B,X)}.$$
 
 Let $A$ and $B$ be Banach algebras. We say that $A$ and $B$ commute with respect to an $A-B$-bimodule $X$ if for each $a\in A, b\in B$ and $x\in X$ we have
$a(bx)=b(ax)$ and $(xb)a=(xa)b$.\\
Note that if $A$ and $B$ commute with respect to $X$ then they commute with respect to $A-B$-bimodule $X^*$.

For example if we consider $X$ as an $A-B$-bimodule with module actions zero on $A$, then  $A$ and $B$ commute with respect to this $A-B$-bimodule. Also if $A$ is commutative then $A$ commutes with itself with respect to $A$.
\begin{proposition}
If $A$ and $B$ commute with respect to an $A-B$-bimodule $X$ and there are nonzero derivations $d:A\rightarrow X$ and $d':B\rightarrow X$, then there is a nonzero biderivation from $A\times B$ into $X$.
\end{proposition}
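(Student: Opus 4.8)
The plan is to manufacture the biderivation from the $B$-derivation $d'$ together with the $A$-module structure, letting the commuting hypothesis carry the derivation property across the two variables. Concretely, I would set
\[
D(a,b) = a\cdot d'(b) - d'(b)\cdot a \qquad (a\in A,\ b\in B),
\]
i.e. $D(a,b)=\delta_{d'(b)}(a)$, where $\delta_{d'(b)}$ is the inner derivation of the $A$-bimodule $X$ determined by the element $d'(b)\in X$. A symmetric candidate, $\tilde D(a,b)=d(a)\cdot b - b\cdot d(a)$, is obtained by interchanging the roles of $A$ and $B$; I would keep both in reserve.

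For fixed $b$, the map $a\mapsto D(a,b)$ is by construction an inner derivation of the $A$-bimodule $X$, hence automatically a derivation; this disposes of the first variable using nothing beyond bilinearity. The content is in the second variable. Here I would introduce, for fixed $a$, the operator $L_a\colon X\to X$, $L_a(x)=a\cdot x - x\cdot a$, so that $D(a,b)=L_a\bigl(d'(b)\bigr)$. The key step is to verify that $L_a$ is a homomorphism of $B$-bimodules, that is $L_a(b\cdot x)=b\cdot L_a(x)$ and $L_a(x\cdot b)=L_a(x)\cdot b$; this is exactly where the commuting identities $a(bx)=b(ax)$ and $(xb)a=(xa)b$, together with the $A$-$B$-bimodule identities $a(xb)=(ax)b$ and $b(xa)=(bx)a$, are consumed (each of the four identities is used once). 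Once $L_a$ is a $B$-bimodule map, the composite $L_a\circ d'$ is a derivation because $d'$ is, so $b\mapsto D(a,b)$ is a derivation and therefore $D\in BZ^1(A\times B,X)$.

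The remaining, and genuinely delicate, point is non-triviality, and I expect this to be the main obstacle. If there are $a_0\in A$ and $b_0\in B$ with $a_0\cdot d'(b_0)\neq d'(b_0)\cdot a_0$ — equivalently, if $d'(B)\not\subseteq Z(A,X)$ — then $D(a_0,b_0)\neq 0$ and we are finished. The point of assuming \emph{both} $d$ and $d'$ nonzero is to hedge against degeneracy: should $d'(B)\subseteq Z(A,X)$, I would switch to $\tilde D(a,b)=d(a)\cdot b - b\cdot d(a)$, which is nonzero as soon as $d(A)\not\subseteq Z(B,X)$. The worst case, in which simultaneously $d'(B)\subseteq Z(A,X)$ and $d(A)\subseteq Z(B,X)$, is not reachable by a commutator, and there one must instead exhibit a nonzero bilinear map that is a biderivation for purely degenerate reasons (in the spirit of the rank-one functional constructions appearing in Proposition \ref{n}, where all correction terms vanish on $A^2$ and $B^2$). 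Settling this degenerate case cleanly is where I anticipate the real difficulty.
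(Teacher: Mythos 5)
Your two candidate maps are, up to sign, exactly the paper's proof: the paper's entire argument is the single line defining $D(a,b)=\delta_{d(a)}(b)+\delta_{d'(b)}(a)$, i.e.\ the sum of your $D$ and $-\tilde D$, with no verification of the biderivation identity and no argument for non-vanishing. Your verification supplies what the paper omits, and it is correct: in the first variable the map is an inner derivation, and in the second variable writing $D(a,b)=L_a(d'(b))$ and checking that $L_a$ is a $B$-bimodule map (which consumes precisely the two commuting identities and the two $A$-$B$-compatibility identities) shows that $L_a\circ d'$ is a derivation. In fact, keeping the two candidates separate is strictly more robust than the paper's summing: if the two summands are nonzero but happen to cancel identically, the paper's $D$ is the zero map while one of your candidates is not.

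The degenerate case you flagged is a genuine gap --- but note that it is equally a gap in the paper's own proof, not only in yours. If $d'(B)\subseteq Z(A,X)$ and $d(A)\subseteq Z(B,X)$, both summands of the paper's $D$ vanish identically, so the paper's map proves nothing. This situation is reachable under the hypotheses of the proposition: take $A=B=X=\mathbb{C}$ with zero multiplication and all module actions zero, and $d=d'$ the identity map; then every linear map is a derivation, the commuting identities hold trivially, and every commutator-type candidate (yours and the paper's) is identically zero. The proposition itself still holds there, since with zero products and zero actions \emph{every} bounded bilinear map is a biderivation, e.g.\ $D(z,w)=zw$. A unital commutative instance shows the same phenomenon with more content: for $A=B=X=\mathbb{C}\oplus\mathbb{C}\epsilon$ with $\epsilon^2=0$ and $d=d'$ given by $d(\alpha+\beta\epsilon)=\beta\epsilon$, all commutators vanish, yet $D(\alpha+\beta\epsilon,\gamma+\delta\epsilon)=\beta\delta\epsilon$ is a nonzero biderivation. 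So your instinct is right that the degenerate case requires a non-commutator (product or rank-one type) construction; neither your argument nor the paper's one-line proof provides one in general. In short, your proposal establishes exactly as much as the paper does, with more care, and the case you could not settle is precisely the case the paper silently ignores.
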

\begin{proof}
Define $D:A\times B\rightarrow X$ by $D(a,b)=\delta_{d(a)}(b)+\delta_{d'(b)}(a)$.
\end{proof}
\begin{proposition} 
For each biderivation $D:A\times B\rightarrow X$, $$D(a,b)[c,d]=[a,b]D(c,d)\hspace{1cm}(a,c\in A , b,d\in B).$$
\end{proposition}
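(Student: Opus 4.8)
The plan is to follow the proof of Lemma \ref{lem1}, adapting it to the situation in which the two slots of $D$ are fed from the two distinct algebras $A$ and $B$. Since $a,c$ both lie in $A$ and $b,d$ both lie in $B$, the products $ac\in A$ and $bd\in B$ are legitimate, so I would evaluate the single element $D(ac,bd)\in X$ in two different ways and compare the results. First I expand in the first ($A$-)variable and then in the second ($B$-)variable, using $D(ac,\,\cdot\,)=aD(c,\,\cdot\,)+D(a,\,\cdot\,)c$ followed by $D(\,\cdot\,,bd)=D(\,\cdot\,,b)d+bD(\,\cdot\,,d)$; this produces four terms. Then I carry out the same computation in the opposite order---second variable first, then first variable---obtaining a second four-term expression for the same element $D(ac,bd)$.

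Equating the two expansions leaves four terms on each side. The $A$-$B$-bimodule compatibility relations $a(xb)=(ax)b$ and $b(xa)=(bx)a$ are precisely what is needed to match two of the terms from the first expansion with two from the second: they give $a(D(c,b)d)=(aD(c,b))d$ and $(bD(a,d))c=b(D(a,d)c)$, and these cancel. Unwinding the commutator notation according to the conventions $x[c,d]=(xc)d-(xd)c$ (the right action, exactly as in the definition of inner biderivation for a pair) and $[a,b]x=a(bx)-b(ax)$ (the corresponding left action), the four surviving terms rearrange into precisely $D(a,b)[c,d]=[a,b]D(c,d)$.

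The computation itself is bookkeeping, so the step to watch is the use of the bimodule compatibility: because $A$ and $B$ are distinct algebras one cannot form a mixed product such as $ab$, and the only mechanism for interchanging the order in which an element of $A$ and an element of $B$ act on a point of $X$ is through these two identities. I note that the hypothesis that $A$ and $B$ commute with respect to $X$ is not needed here; only the bimodule axioms enter, in complete parallel with Lemma \ref{lem1}.
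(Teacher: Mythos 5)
Your proposal is correct and coincides with the paper's own proof: the paper likewise expands $D(ac,bd)$ in both orders, cancels the two mixed terms $a(D(c,b)d)=(aD(c,b))d$ and $(bD(a,d))c=b(D(a,d)c)$ via the $A$-$B$-bimodule compatibility relations, and reads off $D(a,b)[c,d]=[a,b]D(c,d)$ from the surviving four terms. Your observation that only the bimodule axioms (and not the commuting hypothesis) are needed is also consistent with the paper, whose proof invokes nothing beyond the fact that $X$ is an $A$-$B$-bimodule.
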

\begin{proof}
Since $X$ is an $A-B$-bimodule, we have for each $a,c\in A, b,d\in B$,
$$\begin{array}{rcl}
a(bD(c,d))+a(D(c,b)d)&+&(D(a,b)d)c+(bD(a,d))c\\&=&aD(c,bd)+D(a,bd)c\\
&=&D(ac,bd)\\&=&bD(ac,d)+D(ac,b)d\\
&=&b(aD(c,d))+b(D(a,d)c)+(aD(c,b))d+(D(a,b)c)d.
\end{array}$$
So $$[a,b]D(c,d)-D(a,b)[c,d]=a(bD(c,d))+b(aD(c,d)+(D(a,b)d)c-(D(a,b)c)d=0.$$
\end{proof}

 \begin{definition}
We say that the pair $(A,B)$ is  biamenable if for each $A-B$-bimodule $X$, $BH^1(A\times B, X^*)=\{0\}$.
\end{definition}
Obviously  biamenability of a Banach algebra $A$ is nothing else than some refinement of biamenability of the pair $(A,A)$ with the same module actions on each argument, Or briefly  biamenability of the pair $(A,A)$ implies  biamenability of the Banach algebra $A$.

\begin{remark}
If $Z(A,X)=\{0\}$ or $Z(B,X)=\{0\}$ then the only inner biderivation $D:A\times B\rightarrow X$ is zero. Therefore if $X$ is an $A-B$-bimodule such that there is a non zero biderivation from $A\times B$ into $X^*$ and $Z(A,X)=\{0\}$, then $(A, B)$ is not biamenable.
\end{remark}

The following theorem says that amenability of Banach algebras $A$ and $B$ are necessary conditions for biamenability of the pair $(A,B)$.

\begin{theorem}
\begin{itemize}
\item[(i)] If $\sigma(B)\neq\emptyset$ and $(A,B)$ is biamenable then $A$ is amenable. 
\item[(ii)] If $\sigma(A)\neq\emptyset$ and $(A,B)$ is biamenable then $B$ is amenable. 
\end{itemize}
\end{theorem}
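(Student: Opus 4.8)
The plan is to prove (i) and obtain (ii) by the symmetric argument, interchanging the roles of $A$ and $B$ and using a character of $A$ instead. For (i), I would fix a character $\chi \in \sigma(B)$, so that $\chi \colon B \to \mathbb{C}$ is a nonzero multiplicative functional, and reduce amenability of $A$ to the assertion that every bounded derivation $d \colon A \to Y^*$ into the dual of an arbitrary Banach $A$-bimodule $Y$ is inner. In fact the construction below will force such a $d$ to vanish, which is more than enough.

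First I would promote $Y$ to an $A-B$-bimodule by keeping its given $A$-actions and letting $B$ act through $\chi$ in an asymmetric fashion, namely $b \cdot y = \chi(b) y$ and $y \cdot b = 0$. Multiplicativity of $\chi$ is exactly what makes the prescription $b \cdot y = \chi(b) y$ an associative left action, and a quick check of the compatibility identities $a(xb) = (ax)b$ and $b(xa) = (bx)a$ confirms that $Y$ is then a genuine $A-B$-bimodule; dualizing, the induced $B$-actions on $Y^*$ are $b \cdot \phi = 0$ and $\phi \cdot b = \chi(b)\phi$. I would then set $D \colon A \times B \to Y^*$, $D(a,b) = \chi(b)\,d(a)$, and verify that it is a biderivation. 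The Leibniz rule in the $A$-variable is immediate, since $d$ is a derivation and $\chi(b)$ is a scalar; the $B$-variable is the only delicate point, where the asymmetric choice pays off, because $b \cdot {}_aD(b')$ vanishes while ${}_aD(b)\cdot b'$ produces $\chi(b)\chi(b')d(a) = \chi(bb')d(a)$, matching ${}_aD(bb')$ exactly.

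Having $D \in BZ^1(A\times B, Y^*)$, I would read off the inner biderivations. Because $\phi \cdot b = \chi(b)\phi$ and $b \cdot \phi = 0$ with $\chi \neq 0$, we get $Z(B,Y^*) = \{0\}$, so by the remark preceding the theorem the only inner biderivation from $A \times B$ into $Y^*$ is zero. Biamenability of $(A,B)$ now forces $D$ to be inner, hence $D = 0$, that is $\chi(b)d(a) = 0$ for all $a \in A$, $b \in B$; choosing $b$ with $\chi(b) \neq 0$ gives $d = 0$. Thus every derivation of $A$ into a dual module is trivially inner, and $A$ is amenable.

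I expect the real obstacle to be locating this module structure rather than performing any one calculation. The natural symmetric choice $b \cdot y = \chi(b) y = y \cdot b$ keeps $Z(B,Y^*)$ large but breaks the Leibniz rule for $D$, introducing a spurious factor of $2$, whereas the asymmetric choice restores the Leibniz rule at the cost of collapsing $Z(B,Y^*)$ to $\{0\}$; it is precisely this collapse that, through the preceding remark, turns biamenability into the vanishing of $D$. Reconciling these competing demands is the heart of the matter, after which the argument is routine.
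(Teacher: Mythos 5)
Your proposal is correct and takes essentially the same route as the paper: the paper also fixes $\varphi\in\sigma(B)$, turns the given $A$-bimodule into an $A$-$B$-bimodule by letting $B$ act through $\varphi$ on one side and trivially on the other, forms the biderivation $D(a,b)=\varphi(b)d(a)$, and invokes biamenability of the pair. The only immaterial differences are that the paper places the trivial $B$-action on the opposite side (so the dual actions are $b\cdot f=\varphi(b)f$, $f\cdot b=0$) and states the conclusion as $d=\delta_{x^*b/\varphi(b)}$ for $x^*\in Z(A,X^*)\cap Z(B,X^*)$, whereas you observe explicitly that $Z(B,Y^*)=\{0\}$ forces $D=0$ and hence $d=0$ --- which is in fact what happens in the paper's setting as well, since there too the centralizer is trivial.
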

\begin{proof}
We only prove (i).\\
Let $X$ be a Banach $A$-bimodule and $d:A\rightarrow X^*$ be a derivation. consider $B$-bimodule actions  $bx=0$ and  $xb=\varphi(b)x$, for some $\varphi\in \sigma(B)$.
Then it is easy to see that $X$ is a Banach $A-B$-bimodule.

Now $D:A \times B\rightarrow X^*$ with $D(a,b)=\varphi(b)d(a)$ is a biderivation and so $D(a,b)=x^*[a,b]=\delta_{x^*b}(a)$, for some $x^*\in Z(A,X^*)\cap Z(B,X^*)$. so $d(a)=\frac{D(a,b)}{\varphi(b)}=\delta_{\frac{x^*b}{\varphi(b)}}(a)$. for some $b\in B$ such that $\varphi(b)\neq 0.$ i.e. $A$ is amenable.
\end{proof}
Note that  the converse of this theorem is not true, in general. For example $\mathbb{C}$ is not biamenable by Corollary \ref{com} and so $(\mathbb{C},\mathbb{C})$ is not biamenable. Also $\sigma(\mathbb{C})\neq \emptyset$, but it is amenable.
\section{a condition for preserving  biamenability}
In this section we investigate the relation between  biamenability of two pairs $(A,B)$ and $(E,F)$ of Banach algebras, where there exist homomorphisms $T:E\rightarrow A$ and $S:F\rightarrow B$.

Let $A, B, E$ and $F$ be Banach algebras and $T:E\rightarrow A$ and $S:F\rightarrow B$ be bounded homomorphism of Banach algebras. If $X$ is a Banach $A-B$-bimodule, then we may consider $X$ as a Banach $E-F$-bimodule with module actions $e.x=T(e)x$, $x.e=xT(e)$, $f.x=S(f)x$ and $x.f=xS(f)$, which $x\in X$, $e\in E$ and $f\in F$.
\begin{theorem}
If $D:A\times B\rightarrow X$ is a biderivation, then $D\circ(T\times S):E\times F\rightarrow X$ is a biderivation. Also if $D$ is inner so is $D\circ(T\times S)$.

The converse is true if $T$ and $S$ are onto.
\end{theorem}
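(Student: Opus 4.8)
The plan is to treat the statement as four separate verifications: that $D\circ(T\times S)$ is a biderivation, that it is inner whenever $D$ is, and then the two corresponding converses under the surjectivity hypothesis. Throughout I will write $\widetilde D=D\circ(T\times S)$, so that $\widetilde D(e,f)=D(T(e),S(f))$ for $e\in E$, $f\in F$; boundedness of $\widetilde D$ is immediate since $T$, $S$ and $D$ are bounded. I will keep in mind that the $E$- and $F$-module actions on $X$ are, by definition, $e.x=T(e)x$, $x.e=xT(e)$, $f.x=S(f)x$ and $x.f=xS(f)$.

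For the first assertion I would fix $f\in F$ and check that $e\mapsto\widetilde D(e,f)$ is a derivation for the $E$-actions. Using that $T$ is a homomorphism and then that the first slot of $D$ is an $A$-derivation,
$$\widetilde D(e_1e_2,f)=D(T(e_1)T(e_2),S(f))=D(T(e_1),S(f))T(e_2)+T(e_1)D(T(e_2),S(f)).$$
Rewriting the two summands as $\widetilde D(e_1,f).e_2$ and $e_1.\widetilde D(e_2,f)$ via the defining formulas for the $E$-actions gives exactly the derivation identity; the second slot is handled identically with $S$ in place of $T$. Thus $\widetilde D\in BZ^1(E\times F,X)$.

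For innerness, suppose $D=D_x$ with $x\in Z(A,X)\cap Z(B,X)$. The key observation is that this same $x$ is central for the pulled-back actions: for $e\in E$ we have $e.x=T(e)x=xT(e)=x.e$ because $x\in Z(A,X)$, so $x\in Z(E,X)$, and likewise $x\in Z(F,X)$. A direct substitution then gives $\widetilde D(e,f)=(xT(e))S(f)-(xS(f))T(e)=(x.e).f-(x.f).e=x[e,f]$, so $\widetilde D=D_x$ is inner.

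Finally, for the converses I would assume $T$ and $S$ onto and simply lift. Given $a_1,a_2\in A$ and $b\in B$, pick $e_1,e_2\in E$ and $f\in F$ with $T(e_i)=a_i$ and $S(f)=b$; since $T$ is a homomorphism, $T(e_1e_2)=a_1a_2$, so expanding $D(a_1a_2,b)=\widetilde D(e_1e_2,f)$ by the derivation property of $\widetilde D$ and translating the actions back recovers $D(a_1,b)a_2+a_1D(a_2,b)$, and symmetrically in the second slot; hence $D$ is a biderivation. For the inner converse, if $\widetilde D=D_y$ with $y\in Z(E,X)\cap Z(F,X)$, then $y.e=e.y$ for all $e\in E$ reads $yT(e)=T(e)y$, giving $ya=ay$ for every $a\in T(E)$, and $T(E)=A$ precisely because $T$ is onto; the same argument with $S$ yields $y\in Z(A,X)\cap Z(B,X)$. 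Lifting each $(a,b)$ then gives $D(a,b)=\widetilde D(e,f)=(ya)b-(yb)a=y[a,b]$, so $D=D_y$. The computations are entirely routine; the only real care needed is the bookkeeping of left and right actions through $T$ and $S$, and the observation — which is the single place the hypothesis is genuinely used — that centrality descends from $E,F$ to $A,B$ exactly because of surjectivity.
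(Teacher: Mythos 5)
Your proof is correct and follows essentially the same route as the paper: direct verification of the derivation identities via the homomorphism property of $T$ and $S$, the observation that centrality of $x$ pulls back to the $E$- and $F$-actions, and lifting elements through the surjections for both converses. You merely spell out the steps the paper leaves as ``easy to check'' and ``by a similar argument'' (notably the inner converse, where you correctly note that surjectivity is what makes centrality descend from $E,F$ to $A,B$).
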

\begin{proof}
It is easy to check that for biderivation $D$, $D\circ(T\times S)$ is a biderivation. Also if $D$ is inner, then there is $x\in Z(A,X)\cap Z(B,X)$ such that $$D(a,b)=(xa)b-(xb)a\hspace{1cm}(a\in A, b\in B).$$ Therefore
$$D\circ(T\times S)(e,f)=(xT(e))S(f)-(xS(f))T(e)=(x.e).f-(x.f).e\hspace{1cm}(e\in E, f\in F).$$
Similarly we can show that $x\in Z(E,X)\cap Z(F,X)$.

For the next part note that if $T$ and $S$ are onto, then for each $a\in A$ and $b\in B$, $D(a,b)=D\circ(T\times S)(e,f)$ for some $e\in E$ and $f\in F$.
So for each $a, a'\in A$ and $b\in B$ there are $e, e'\in E$ and $f\in F$ such that
$$\begin{array}{rcl}
D(aa',b)&=&D\circ(T\times S)(ee',f)\\
&=&D\circ(T\times S)(e,f).e'+e.D\circ(T\times S)(e',f)\\
&=&D(a,b)T(e')+T(e)D(a',b)\\
&=&D(a,b)a'+aD(a',b).
\end{array}$$
Similarly we have for each $a\in A$ and $b, b'\in B$, $D(a,bb')=D(a,b)b'+bD(a,b').$
By a similar argument we may prove that if  $T$ and $S$ are onto and $D\circ(T\times S)$  is inner so is $D$.
\end{proof}
\begin{corollary}
Consider $A, B, E$ and $F$ as above. If  $T$ and $S$ are onto and $(E,F)$ is  biamenable, then so is $(A,B)$.
\end{corollary}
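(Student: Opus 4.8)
The plan is to reduce biamenability of $(A,B)$ to that of $(E,F)$ by pulling modules back along $T$ and $S$ and then invoking both directions of the preceding theorem. So I would fix an arbitrary Banach $A$-$B$-bimodule $X$ together with a biderivation $D\in BZ^1(A\times B, X^*)$, and aim to show that $D$ is inner.

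First I would regard $X$ as a Banach $E$-$F$-bimodule via the actions $e.x=T(e)x$, $x.e=xT(e)$, $f.x=S(f)x$ and $x.f=xS(f)$, exactly as set up before the theorem. The crucial compatibility point is that the dual $E$-$F$-bimodule of this $X$ coincides with $X^*$ carrying its $A$-$B$-bimodule structure pulled back along $T$ and $S$. Indeed, for $\phi\in X^*$ the adjoint left $E$-action computes to $(e.\phi)(x)=\phi(x.e)=\phi(xT(e))=(T(e).\phi)(x)$, and the remaining three actions match in the same way. Hence the two $E$-$F$-bimodule structures on $X^*$ agree, which is what allows biamenability of $(E,F)$, a statement about duals of $E$-$F$-bimodules, to be applied to $X^*$.

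Next, by the first assertion of the theorem, $D\circ(T\times S)\colon E\times F\to X^*$ is a biderivation into this dual $E$-$F$-bimodule, that is $D\circ(T\times S)\in BZ^1(E\times F, X^*)$. Since $(E,F)$ is biamenable and $X^*$ is the dual of the $E$-$F$-bimodule $X$, we have $BH^1(E\times F, X^*)=\{0\}$, so $D\circ(T\times S)$ is an inner biderivation. Finally I would invoke the converse direction of the theorem, available precisely because $T$ and $S$ are onto, which guarantees that innerness of $D\circ(T\times S)$ forces innerness of $D$ itself. As $X$ and $D$ were arbitrary, this gives $BH^1(A\times B, X^*)=\{0\}$ for every $A$-$B$-bimodule $X$, i.e.\ $(A,B)$ is biamenable.

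The only step demanding genuine care is the identification of the two module structures on $X^*$, namely that pulling back and then dualizing yields the same $E$-$F$-bimodule as dualizing and then pulling back; without this, biamenability of $(E,F)$ would not directly apply to $X^*$. Once that routine verification is in place, the result is a clean application of the two directions of the theorem.
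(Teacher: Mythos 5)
Your proof is correct and is precisely the argument the paper intends: the corollary is stated there without proof as an immediate consequence of the preceding theorem, applied to $X^*$ in both directions (pull back $D$ along $T\times S$, use biamenability of $(E,F)$ to get innerness, then use surjectivity of $T$ and $S$ to transfer innerness back to $D$). Your explicit check that the pulled-back dual structure on $X^*$ agrees with the dual of the pulled-back structure on $X$ fills in the one compatibility detail the paper leaves implicit.
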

\bibliography{mybibfile}

\end{document}